\documentclass[a4paper,UKenglish,pdfa]{lipics-v2021}

\newcommand{\sign}{\textnormal{sign}}
\newcommand{\boldsign}{\textnormal{\textbf{sign}}}
\newcommand{\pf}{\textnormal{pf}}
\newcommand{\spf}{\textnormal{spf}}
\newcommand{\Pf}{\textnormal{Pf}}
\newcommand{\fw}{\textnormal{fw}}
\newcommand{\rev}{\mathbin{\textnormal{rev}}}
\newcommand{\symdiff}{\mathbin{\triangle}}

\newcommand{\rank}{\textnormal{rank}}
\newcommand{\boldalpha}{\boldsymbol{\alpha}}
\newcommand{\boldtau}{\boldsymbol{\tau}}
\newcommand{\gf}{\textnormal{GF}}

\newcommand{\sharpp}{{\footnotesize\#}\textsc{p}}

\pdfoutput=1 
\hideLIPIcs  

\title{Lower Bounds for the Pfaffian Number of Graphs}

\author{Enrique {Junchaya}}{Universidade de São Paulo, Brazil}{enriquejh@usp.br}{}{funded by São Paulo Research Foundation (FAPESP), Brazil. Process number \#2024/02636-4}
\author{Cláudio {Leonardo Lucchesi}}{Universidade de São Paulo, Brazil}{}{}{}
\author{Alberto Alexandre {Assis Miranda}}{Instituto Federal do Norte de Minas Gerais, Montes Claros, Brazil}{}{}{}

\authorrunning{E. Junchaya, C.\,L. Lucchesi, and A.\,A.\,A. Miranda}

\Copyright{Enrique Junchaya, Cláudio\,L. Lucchesi, and Alberto\,A.\,A. Miranda}

\ccsdesc[500]{Theory of computation~Graph algorithms analysis}
\ccsdesc[300]{Mathema\-tics of computing~Graph algorithms}
\ccsdesc[300]{Mathematics of computing~Matchings and factors}

\keywords{Perfect matchings, pfaffian graphs, $k$-pfaffian graphs, pfaffian number of graphs, Khatri-Rao product} 

\relatedversion{}

\nolinenumbers 

\begin{document}

\maketitle

\begin{abstract}
The number of perfect matchings  of a \emph{$k$-pfaffian graph} can be
counted  by computing  a linear  combination of  the pfaffians  of $k$
matrices.  The \emph{pfaffian  number} of a graph $G$  is the smallest
integer~$k$ such that~$G$ is $k$-pfaffian.  We present the first known
lower bounds  for the pfaffian  number of graphs.  As  an intermediate
step, we prove an upper bound for  the rank of two matrices related to
their Khatri-Rao  product, a  result of independent  relevance. 
One of the consequences of these results is the existence of graphs whose pfaffian numbers are arbitrarily large.
\end{abstract}

\section{Introduction}
\label{sec:introduction}

Counting perfect  matchings of a  graph is a fundamental  problem with
practical    applications.     Its    main   applications    are    in
chemistry~\cite{gutman1977,    herndon1974},    statistical    physics~\cite{brush1967,     kast1967}     and     quantum     mechanics~\cite{kgz2017}.
Examples of some of  such  applications  can  be  found  in  the  book  by  Lovász  and
Plummer~\cite[Chapter 8]{lopl86}.  On the  other hand, this problem is
\sharpp-complete~\cite{vali79},  which  suggests  that  there  are  no
efficient algorithms  to solve it  in general.   In that sense,  it is
interesting to study graph classes  for which efficient algorithms for
coun\-ting perfect matchings are known.   One of the most studied such
classes is the class of pfaffian graphs.

Unless otherwise stated, assume that the graphs in this work are simple.
Let $G$ be a graph and let $D$ be an orientation of $G$.
Let $(1,2,\dots,2k)$ be an enumeration of the vertices in $G$.
Let $M$ be a perfect matching of $G$.
The \emph{permutation} of~$M$ in~$D$, denoted by~$\pi_D(M)$, is
\[
\begin{pmatrix}
	1   & 2   & 3   & 4   & \dots & 2k - 1 & 2k \\
	u_1 & v_1 & u_2 & v_2 & \dots & u_k    & v_k
\end{pmatrix},
\]
where,  for each  $i$, $1  \le i  \le k$,  $u_iv_i$ is  an arc  of $M$
oriented from $u_i$  to $v_i$ in $D$.  The \emph{sign}  of $M$ in~$D$,
denoted by  $\sign_D(M)$, is the  sign of the  permutation $\pi_D(M)$.
Notice  that $\sign_D(M)$  is independent  of the  order in  which the
edges  of $M$  are listed  in  $\pi_D(M)$.  The  orientation~$D$ is  a
\emph{pfaffian orientation} if  and only if all  the perfect matchings
of $G$ have the same sign in  $D$.  A \emph{pfaffian graph} is a graph
that admits a pfaffian orientation.

Denote  the
skew-symmetric  adjacency  matrix of  $D$  by  $\mathbf{A_D} := (a_{ij})$,
defined as follows:
	\[
		a_{ij} = \left\{ 
		\begin{array}{rl}
			1,  & \text{if $ij$ is an arc of } D, \\
			-1, & \text{if $ji$ is an arc of } D, \\
			0,  & \text{otherwise}. \\
		\end{array}
		\right.
	\]
The  \emph{pfaffian} of a
skew-symmetric matrix $\mathbf{A}$  is a polynomial in  the entries of
the   matrix   and   is   denoted  by   $\Pf(\mathbf{A})$.    If   the
orientation~$D$ is  pfaffian, then  $|\Pf(\mathbf{A_D})|$ is  equal to
the number  of perfect matchings of  $G$. It turns out  that, for every
skew-symmetric  matrix, the square of its  pfaffian is  equal  to its
determinant.   Hence,  if $D$  is  pfaffian  then  it is  possible  to
efficiently compute the number of perfect matchings of $G$.

The reader  can find  a detailed  account of the  main results  of the
theory of pfaffian graphs in the book by Lucchesi and Murty~\cite[Part
  III]{lm2024}.

Of course, not  every graph is pfaffian.  The graph  $K_{3, 3}$ is the
smallest   non-pfaffian   graph.    Hence,   we   might   consider   a
generalization of pfaffian graphs.  Let  $k$ be a positive integer.  A
\emph{$k$-orientation} of  $G$ is  a vector  $\mathbf{D} =  (D_1, D_2,
\dots, D_k)$ of $k$ orientations of $G$.  The \emph{sign} of a perfect
matching $M$ in $\mathbf{D}$ is the vector
\[\boldsign_\mathbf{D}(M) := (\sign_{D_1}(M), \sign_{D_2}(M), \dots, \sign_{D_k}(M)).\]
Denote by $\mathcal{M}(G)$  the set of perfect matchings  of $G$.  The
\emph{signature      matrix}      of     $\mathbf{D}$      is      the
matrix~$\boldsign_{\mathbf{D}}$ of  $|\mathcal{M}(G)|$ rows  such that
every one of  them is the sign in $\mathbf{D}$  of a different perfect
matching of $G$.  The  $k$-orientation $\mathbf{D}$ is \emph{pfaffian}
if the system
\begin{equation}\label{eq:signature-matrix-system}
	\boldsign_{\mathbf{D}} \mathbf{x} = \mathbf{1}
\end{equation}
is solvable.
A vector that satisfies~\eqref{eq:signature-matrix-system} is a \emph{solution} of $\mathbf{D}$.
A graph that admits a pfaffian $k$-orientation is \emph{$k$-pfaffian}.
Notice that relabeling the vertices of $G$ either changes the sign of all of its perfect matchings in $D$ or does not change the sign of any of its perfect matchings in~$D$.
Therefore, the property of a graph being $k$-pfaffian is independent of the enumeration of its vertices.
It is known, as we will see later, that every graph admits a $k$-pfaffian orientation for some integer $k$.
The \emph{pfaffian number} of $G$, denoted by $\pf(G)$, is the smallest integer $k$ such that $G$ is $k$-pfaffian.
In particular, the pfaffian number of a pfaffian graph is~1.

Given a $k$-pfaffian orientation $\mathbf{D}$ of $G$ with solution $\boldalpha$, the number of perfect matchings of $G$ can be computed by
	\[\sum_{i = 1}^k \alpha_i \Pf(\mathbf{A_{D_i}}).\]

Kasteleyn~\cite{kast61}, in 1961, introduced pfaffian orientations for solving problems in statistical physics.
Later, in 1963, he proved that every planar graph is pfaffian~\cite{kast63}.
In his paper of 1963, he also claimed, without proof, that is possible to count the number of perfect matchings of a graph embeddable in a surface of genus $g$ using $4^g$ pfaffians.
This claim was formally proved in the end of the 20th century by Gallucio and Loebl~\cite{galo99} and, independently, by Tesler~\cite{tesl00}.
\begin{theorem}\label{thm:pf-genus}
	Every graph embeddable in an orientable surface of genus $g$ is $4^g$-pfaffian.
	\qed
\end{theorem}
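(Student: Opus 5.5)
We follow the Galluccio--Loebl / Tesler approach: build $4^g$ orientations from a Kasteleyn-type construction on a polygonal model of the surface, and show that the signs of every perfect matching satisfy a fixed linear identity. We may assume $G$ is connected and has a perfect matching; adding edges to triangulate the embedding does not hurt, since deleting edges preserves $k$-pfaffianity (the signature matrix of a subgraph consists of a subset of rows). So we take a cellular embedding of $G$ in the orientable surface $S_g$.

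\emph{Step 1 (cutting the surface).} Choose a spanning tree $T$ of $G$ and a spanning tree $T^*$ of the dual among edges not in $T$. By Euler's formula there remain exactly $2g$ edges $e_1,\dots,e_{2g}$. The graph $G_0 := G-\{e_1,\dots,e_{2g}\}$ embeds in a disk: the surface cut along these edges is a $4g$-gon whose sides are glued as $a_1b_1a_1^{-1}b_1^{-1}\cdots$. Following Kasteleyn, orient $G_0$ so that every inner face is \emph{clockwise odd}; this gives an orientation $D_0$ in which all perfect matchings of $G_0$ have the same sign. We then extend $D_0$ to the $e_i$ in each of the $2^{2g}=4^g$ ways, $\varepsilon\in\{\pm1\}^{2g}$, obtaining orientations $D_\varepsilon$.

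\emph{Step 2 (sign of a matching as a quadratic form).} Fix a reference perfect matching $M_0$. For any perfect matching $M$, the sign comparison $\sign_D(M)\sign_D(M_0)$ is a product over the alternating cycles of $M\symdiff M_0$; for each cycle $C$ it equals $(-1)^{1+\#\text{forward edges}}$ times a correction. Using the clockwise-odd property, this reduces to a parity of the number of vertices enclosed. On the surface, each cycle has a homology class in $H_1(S_g;\mathbb{Z}_2)\cong\mathbb{Z}_2^{2g}$. The key lemma is
\[
\sign_{D_\varepsilon}(M)\,\sign_{D_\varepsilon}(M_0)=(-1)^{q(h(M))+\langle \varepsilon,h(M)\rangle}\cdot c_\varepsilon ,
\]
where $h(M)=[M\symdiff M_0]$, $q$ is a quadratic form refining the intersection pairing, and $c_\varepsilon$ depends only on $\varepsilon$. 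Equivalently, $\sign_{D_\varepsilon}(M)$ depends only on $h(M)$ and is $\pm(-1)^{q_\varepsilon(h(M))}$, where $q_\varepsilon$ ranges over the $4^g$ quadratic refinements (spin structures). This is the main obstacle. It requires tracking how a cycle crossing the sides $a_i,b_i$ of the polygon picks up signs. Cycles crossing both $a_i$ and $b_i$ contribute the cross term $h_{a_i}h_{b_i}$. I would first prove it for cycles in $G_0$ (Kasteleyn's argument with Euler's formula inside the disk), then handle each crossing of a glued side by a local reorientation count.

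\emph{Step 3 (solving the system).} It remains to find $\boldalpha$ with $\sum_\varepsilon \alpha_\varepsilon \sign_{D_\varepsilon}(M)=1$ for all $M$. This holds if it holds for all classes $h\in\mathbb{Z}_2^{2g}$. For $g=1$ the classical identity $\tfrac12(-f_1+f_2+f_3+f_4)$ works. In general, we use the Arf-invariant identity
\[
\sum_{q}\,(-1)^{\mathrm{Arf}(q)}(-1)^{q(h)}=2^g
\quad\text{for every } h,
\]
which follows by factoring over the $g$ hyperbolic planes and reduces to the $g=1$ computation. Setting $\alpha_\varepsilon=\pm 2^{-g}(-1)^{\mathrm{Arf}(q_\varepsilon)}c_\varepsilon\sign_{D_\varepsilon}(M_0)$, where the sign is absorbed per $\varepsilon$, gives a solution of~\eqref{eq:signature-matrix-system}. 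Hence $(D_\varepsilon)_\varepsilon$ is a pfaffian $4^g$-orientation.
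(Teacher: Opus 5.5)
\textbf{Step 1 rests on a false claim, and the failure carries into Step 2.} You assert that $G_0=G-\{e_1,\dots,e_{2g}\}$ embeds in a disk. It does not in general. The tree--cotree argument only shows that the surface cut along $T\cup\{e_1,\dots,e_{2g}\}$ is a disk. In that disk the cotree edges are chords and every tree edge appears twice on the boundary. This says nothing about the planarity of $T$ together with the cotree edges.

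Your own reduction makes $G_0$ non-planar. Whenever the triangulated graph is simple and $g\ge 1$, Euler's formula gives $|E(G)|=3|V|+6g-6$. Hence $|E(G_0)|=3|V|+4g-6>3|V|-6$, so $G_0$ is never planar. A concrete case is the triangulated $4\times 4$ toroidal grid, which is simple and matchable. So there is no planar Kasteleyn orientation $D_0$ of $G_0$ to start from.

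More importantly, orientations that differ only by reversing single edges $e_i$ cannot satisfy your key lemma, whatever $D_0$ is. By Proposition~\ref{prop:sign-inversion}, reversing $e_i$ multiplies $\sign(M)\sign(M_0)$ by $-1$ exactly when $e_i\in M\symdiff M_0$, and this is not a function of $h(M)$. Suppose some $M_0$-alternating cycle $Q$ through $e_i$ bounds a disk, as typically happens. Then $M=M_0\symdiff Q$ has $h(M)=0$ but opposite relative signs in two of your orientations. Your formula forces relative sign $1$ in every $D_\varepsilon$, since taking $M=M_0$ gives $c_\varepsilon=1$. A reversal of an edge set $R$ is harmless when $R$ is a cocycle, meaning every face boundary meets $R$ an even number of times. $\{e_i\}$ is a cocycle only if the same face lies on both sides of $e_i$, which never happens in a simple triangulation.

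\textbf{The fix: reverse a cocycle for each generator, not a single edge.} The paper gives no proof of this theorem; it quotes it from Galluccio--Loebl~\cite{galo99} and Tesler~\cite{tesl00}. They draw $G$ in a $4g$-gon whose sides may be crossed by many edges. They keep a Kasteleyn orientation on the planar part and reverse all edges crossing a given side as a block; that block is a cocycle.

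Inside your tree--cotree set-up you get the same effect as follows:
\begin{enumerate}
\item Fix the orientation of $T$ and orient the $e_i$ according to $\varepsilon$.
\item Orient the cotree edges by peeling leaves of $T^*$, so that every face of $G$ on the surface is clockwise odd. The last face works out because $|V|$ is even.
\end{enumerate}
Two such orientations $D_\varepsilon, D_{\varepsilon'}$ differ on a cocycle. Its value on the fundamental cycle of $e_j$ with respect to $T$ is $[\varepsilon_j\neq\varepsilon'_j]$, so all $2^{2g}$ classes of $H^1(S_g;\mathbb{Z}_2)$ occur.

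With this correction, Step 2 becomes the theorem of Cimasoni and Reshetikhin relating Kasteleyn orientations of surface graphs to quadratic refinements of the intersection form. Your Step 3, the Arf-invariant identity, is correct and then finishes the argument. Your outline of Step 2, however, does not yet prove it. The enclosed-vertices argument only covers alternating cycles that bound a disk. The real work is showing that the sign of a non-separating alternating cycle depends only on its homology class, and that the resulting function is quadratic.
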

Tesler also extended this result for non-orientable surfaces.
In 2009, Norine~\cite{nori09} proved that not every positive integer is the pfaffian number of a graph:
\begin{theorem}[Norine,~\cite{nori09}]\label{thm:norine-pfaffian-numbers}
	Every $3$-pfaffian graph is pfaffian and every $5$-pfaffian graph is~$4$-pfaffian.
	\qed
\end{theorem}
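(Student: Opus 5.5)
The plan is to translate the statement into a question about representing the constant function~$1$ on a finite set of $\pm1$ vectors as a short linear combination of characters, and then to exploit the extra structure that signs of perfect matchings have. Fix a reference orientation $D_1$ of $G$. For any other orientation $D$, let $S \subseteq E(G)$ be the set of edges on which $D$ and $D_1$ disagree. Reversing one arc $uv$ changes $\sign(M)$ exactly when $uv\in M$. Hence $\sign_D(M)=\sign_{D_1}(M)\,(-1)^{|M\cap S|}$ for every perfect matching $M$.

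Consequently, a pfaffian $k$-orientation with solution $\boldalpha$ yields the identity
\[
\sum_{i=1}^k \alpha_i (-1)^{\langle x_i,\chi_M\rangle}=\sign_{D_1}(M)\qquad\text{for all } M\in\mathcal{M}(G),
\]
where $x_i\in\gf(2)^{E}$ is the characteristic vector of $S_i$ and $\chi_M$ is the incidence vector of $M$.

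The key closure fact is the following. For orientations $D_1,D_2,D_3$, the product $\sign_{D_1}\sign_{D_2}\sign_{D_3}$ equals $\sign_{D'}$, where $D'$ is obtained from $D_1$ by reversing the edges of $S_2\symdiff S_3$. Thus realizable sign functions form, up to the fixed factor $\sign_{D_1}$, the full group of characters $M\mapsto(-1)^{\langle x,\chi_M\rangle}$, $x\in\gf(2)^E$. I would then pass to the finite group $H=\gf(2)^m$ obtained by projecting the vectors $\chi_M$ via $\chi_M\mapsto(\langle x_i-x_1,\chi_M\rangle)_{i}$. Let $T\subseteq H$ be the image of $\mathcal{M}(G)$, and let $f$ be the function on $T$ induced by $\sign_{D_1}$. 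The question becomes: if $f$ is the restriction to $T$ of a combination of $3$ (respectively $5$) characters of $H$, is it the restriction of a single character (respectively of $4$)? The answer will depend on structural properties of $T$ coming from perfect matchings, not only on the Fourier side.

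First I will dispose of $k=2$ as a warm-up. If $\alpha_1\epsilon_1+\alpha_2\epsilon_2=1$ for all realized sign pairs $(\epsilon_1,\epsilon_2)$, consider which pairs occur. If both $(1,1)$ and $(1,-1)$ occur, then $\alpha_2=0$, so $D_1$ alone is pfaffian. Otherwise all rows agree in one coordinate up to a global sign, and that coordinate's orientation is already pfaffian. The same elementary analysis on the finitely many sign patterns in $\{\pm1\}^3$ reduces $k=3$ to a few configurations of $T$. For each of them, I would use the perfect-matching structure to exclude the bad ones. For instance, the symmetric difference of two perfect matchings is a union of even alternating cycles. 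Moreover, $\sign_D(M)\sign_D(N)$ is determined by the parities of the number of forward arcs on those cycles. From this I will derive the needed constraint: for perfect matchings $M_1,M_2,M_3$, the vector $\chi_{M_1}+\chi_{M_2}+\chi_{M_3}$ is well-behaved with respect to the characters, so that $T$ is ``affinely closed enough'' that a $3$-term solution forces a $1$-term one.

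The main obstacle is the $5\to4$ case. There I expect a genuine Fourier argument to be needed rather than case checking: write $f=\sum_{i=1}^5\alpha_i\chi_{y_i}$ on $T$, and consider the affine span of $y_1,\dots,y_5$ in the dual group. If its dimension is at most $2$, it has at most $4$ points and we are done. Otherwise I would try to show, using the closure under triple products, that one can replace the five characters by four characters of a $2$-dimensional affine subspace. This would use the fact that $f^2=1$ on $T$ to force cancellations among the $\alpha_i$. I would first attempt this by squaring the identity, $1=\sum_{i,j}\alpha_i\alpha_j\chi_{y_i+y_j}$ on $T$, and comparing supports.
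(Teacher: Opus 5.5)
\paragraph{Review of the proposal.} The paper gives no proof of this statement; it quotes it from Norine~\cite{nori09}. Your proposal therefore has to stand on its own, and for the $5\to4$ half there is a genuine gap.

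\textbf{The $5\to4$ case.} You only describe what you would try, and the tool you propose fails exactly where the difficulty lies.
\begin{itemize}
\item The squared identity $1=\sum_{i,j}\alpha_i\alpha_j\chi_{y_i+y_j}$ holds only on $T$. On a proper subset of $H$ the restricted characters are linearly dependent, since there are more characters than points. So there are no supports to compare, and no cancellation among the $\alpha_i$ is forced.
\item On all of $H$ the comparison would prove too much. The cross terms $\chi_{y_i+y_j}$ can only cancel in pairs coming from a zero-sum $4$-subset of the $y_i$, and five distinct $y_i$ admit at most one such subset. So a $\pm1$-valued function on $H$ can never have exactly five Fourier coefficients. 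This is why everything hinges on the restriction to $T$.
\item Your first case is vacuous: five distinct characters never lie in a $2$-dimensional affine subspace, which has only four points.
\end{itemize}

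\textbf{What is really needed.} Take $\boldalpha=(\frac12,\frac12,\frac12,1,\frac32)$ and the sign vectors $(1,1,1,1,-1)$, $(-1,1,1,-1,1)$, $(1,-1,1,-1,1)$, $(1,1,-1,-1,1)$, $(-1,-1,-1,1,1)$.
\begin{itemize}
\item These five vectors are linearly independent, so $\boldalpha$ is the unique solution and has no zero entry. Hence no four of $s_1,\dots,s_5$ suffice.
\item Nevertheless $1=\frac12(t_1+t_2+t_3-s_4)$ on these vectors, where $t_1=s_1s_2s_4$, $t_2=s_1s_3s_4$, $t_3=s_2s_3s_4$, and $t_1t_2t_3=s_4$.
\end{itemize}
So a proof must \emph{construct} new sign functions that no coefficient comparison will reveal. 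For example, it could show that either some odd product of the $s_i$ is constant on $T$, or there are three odd products $t_1,t_2,t_3$ with $(t_1+t_2+t_3-t_1t_2t_3)/2\equiv1$ on $T$. Doing this requires an actual analysis of the possible sign-vector sets in $\{\pm1\}^5$ (or of the graph), and that analysis is absent. You also say the answer must use perfect-matching structure of $T$, but you never identify any such property.

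\textbf{The $k=3$ case.} Here you look in the wrong place. No ``affine closedness'' of $T$ is needed, and the property you gesture at is not specified. The fix is short.
\begin{itemize}
\item Assume all $\alpha_i\neq0$; otherwise drop an orientation and use $k\le2$.
\item Two sign vectors of perfect matchings cannot differ in exactly one coordinate, since that would force some $\alpha_i=0$.
\item They cannot be antipodal either, since $\boldalpha\cdot\mathbf{s}=1=\boldalpha\cdot(-\mathbf{s})$ is impossible.
\item In $\{\pm1\}^3$ this leaves only pairs differing in exactly two coordinates, so $s_1s_2s_3$ is constant on $\mathcal{M}(G)$.
\item By your closure fact, $s_1s_2s_3$ is the sign function of $D_1\rev(S_2\symdiff S_3)$, which is therefore a pfaffian orientation.
\end{itemize}
The same reasoning settles $k=2$. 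This part of your plan is easily repaired; the $5\to4$ part is the missing content.
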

In particular,  the pfaffian  number of  $K_{3,3}$, embeddable  in the
torus, is four.   Due to this and other results,  Norine conjectured that
the pfaffian  numbers are always a  power of four.  However,  in 2011,
Miranda  and Lucchesi~\cite{milu11}  showed a  counterexample to  that
conjecture: a  graph with pfaffian number  six.  
Norine~\cite{nori09} also characterized  graphs with  pfaffian number  four.  More
recently,  in   2021,  Costa  Moço,  Miranda   and  Nunes da Silva~\cite{cms21}
characterized   graphs   with   pfaffian  number   six.    These   two
characterizations  are similar  but it  is not  known if  they can  be
generalized.

\subsection{Preliminaries and Notation}

Let $G$ be a graph.  The graph  $G$ is \emph{matchable} if it admits a
perfect  matching.   An edge  of  $G$  is  \emph{matchable} if  it  is
contained  in  some   perfect  matching  of  $G$.   A   graph  $G$  is
\emph{matching covered} if it is connected,  has at least one edge and
all of its edges are matchable.   The study of $k$-pfaffian graphs can
be naturally  reduced to matching  covered graphs.  A subgraph  $H$ of
$G$ is \emph{conformal} if~$H$ and~$G - V(H)$ are matchable.

Let $D$ and $D'$ be orientations of  the same graph $G$.  Denote by $D
\symdiff D'$ the set of edges  of $G$ that have different orientations
in  $D$  and  in~$D'$.   The  next  result  follows  easily  from  the
definition of sign of a perfect matching.
\begin{proposition}\label{prop:sign-inversion}
	Let $G$  be a graph  and let $D$  and $D'$ be  orientations of
	$G$.  Let $M$ be a perfect matching of $G$.  Then,
		\[ \sign_D(M) \cdot \sign_{D'}(M) = (-1)^{|M \cap (D \symdiff D')|}. \tag*{\qed} \]
\end{proposition}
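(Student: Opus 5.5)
The identity follows by reducing the two orientations to each other one arc at a time. Write $F := D \symdiff D'$ and fix a perfect matching $M$ with arcs listed as in $\pi_D(M)$. The only arcs of $M$ whose orientation differs between $D$ and $D'$ are those in $M \cap F$. Every other arc $u_iv_i$ of $M$ is oriented the same way in both orientations.

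We may therefore take $\pi_{D'}(M)$ to be $\pi_D(M)$ with the pair $(u_i, v_i)$ swapped in the bottom row for each arc of $M \cap F$, keeping the same order of edges. This is legitimate because, as the paper notes, the sign does not depend on the order in which the edges of $M$ are listed. The two permutations then differ by composition with $|M\cap F|$ disjoint transpositions. Each transposition exchanges the entries in positions $2i-1$ and $2i$.

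Since every transposition has sign $-1$ and the sign is multiplicative, $\sign_{D'}(M) = (-1)^{|M\cap F|}\sign_D(M)$. Multiplying both sides by $\sign_D(M) \in \{\pm 1\}$ gives the statement.

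There is no real obstacle. The only care needed is to fix a common edge ordering for both permutations, so that they differ precisely by the transpositions within the reversed pairs.
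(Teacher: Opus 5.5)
Your argument is correct and is the natural unpacking of the definition; the paper gives no written proof and only remarks that the identity follows easily from the definition of the sign of a perfect matching. Fixing a common edge order, so that $\pi_{D'}(M)$ differs from $\pi_D(M)$ by $|M \cap (D \symdiff D')|$ disjoint transpositions, makes that remark precise.
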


Let $T$  be a trail  of $G$.   An arc of  $D$ is a  \emph{forward arc}
of~$T$ if it is traversed from tail  to head.  The set of forward arcs
of $T$ is denoted by $\fw_D(T)$.  Let~$Q$ be a conformal cycle of $G$.
The \emph{sign}  of $Q$  in $D$,  denoted by  $\sign_D(Q)$, is  $1$ if
$|\fw_D(Q)|$ is odd  and $-1$ otherwise.  As $Q$ is  conformal then it
is even, hence the sign of $Q$ in $D$ does not depend on the direction
of its traversal.   The following basic result appears in  the book by
Lovász and Plummer~\cite[Chapter 8]{lopl86}.
\begin{proposition}\label{prop:sign-product}
	Let $G$ be a graph and let  $D$ be an orientation of $G$.  Let
	$M$ and $N$ be perfect matchings of $G$.  Then,
\[ \sign_D(M) \cdot \sign_D(N) = \prod_{Q \in \mathcal{C}} \sign_D(Q), \]
	where  $\mathcal{C}$ is  the set  of all  $(M, N)$-alternating
	cycles.  \qed
\end{proposition}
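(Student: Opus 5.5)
We prove the statement by comparing the permutations of $M$ and $N$ in a fixed enumeration and reducing everything to one alternating cycle at a time. The first step is to choose a convenient form for $\pi_D(N)$. The symmetric difference $M \symdiff N$ is a disjoint union of the $(M,N)$-alternating cycles in $\mathcal{C}$, and $M \cap N$ consists of edges common to both matchings. Since $\sign_D(N)$ does not depend on the order in which the edges of $N$ are listed, we list the edges of $N$ in an order aligned with $M$. Common edges keep their position and orientation in both permutations. For a cycle $Q = (w_1, w_2, \dots, w_{2r})$ with $M$-edges $w_1w_2, w_3w_4, \dots$ and $N$-edges $w_2w_3, \dots, w_{2r}w_1$, the $M$-edges of $Q$ occupy some $2r$ positions of $\pi_D(M)$. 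We place the $N$-edges of $Q$ in exactly those same positions of $\pi_D(N)$.

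Next, $\sign_D(M)\sign_D(N)$ is the sign of $\sigma := \pi_D(N)\circ\pi_D(M)^{-1}$. Under the alignment above, $\sigma$ permutes the vertex set of each cycle $Q$ within itself and fixes every vertex covered by a common edge. Hence $\sigma$ is a product of disjoint pieces $\sigma_Q$, and its sign is the product of the signs of the $\sigma_Q$. It therefore suffices to show $\mathrm{sign}(\sigma_Q) = \sign_D(Q)$ for each cycle $Q$.

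This single-cycle computation is the main obstacle. We first assume every edge of $Q$ is oriented forward along the traversal $w_1 \to w_2 \to \dots \to w_{2r} \to w_1$. Then the $M$-edges contribute the ordered pairs $(w_1,w_2), (w_3,w_4), \dots$ and the $N$-edges contribute $(w_2,w_3), \dots, (w_{2r},w_1)$. The permutation $\sigma_Q$ is the cyclic shift $w_i \mapsto w_{i+1}$, a $2r$-cycle, with sign $-1$. In this orientation $|\fw_D(Q)| = 2r$ is even, so $\sign_D(Q) = -1$, and the claim holds. Now reverse a single edge of $Q$. This swaps two entries in exactly one of the two permutations, so it flips the sign of $\sigma_Q$. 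It also changes $|\fw_D(Q)|$ by one, which flips $\sign_D(Q)$. By induction on the number of backward edges, the identity $\mathrm{sign}(\sigma_Q) = \sign_D(Q)$ holds for every orientation of $Q$.

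Finally, we multiply over all cycles $Q \in \mathcal{C}$; the common edges contribute $+1$. This gives
\[\sign_D(M)\cdot\sign_D(N) = \prod_{Q \in \mathcal{C}} \sign_D(Q).\]
By the remark before the statement, $\sign_D(Q)$ is independent of the traversal direction. So the choice of starting vertex and direction made in the alignment step does not affect the result.
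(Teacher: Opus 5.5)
Your proof is correct. The paper gives no argument for this proposition; it defers to Lovász and Plummer, Chapter~8. Your route is the standard self-contained proof of this fact: align the listings of $M$ and $N$ so that $\pi_D(N)\circ\pi_D(M)^{-1}$ splits into one piece $\sigma_Q$ per alternating cycle, check the all-forward case where $\sigma_Q$ is a $2r$-cycle, then reverse one edge at a time. The only detail left implicit is that the cyclic-shift computation needs the $N$-edge $w_{2j}w_{2j+1}$ placed in the positions of the $M$-edge $w_{2j-1}w_{2j}$. This choice is legitimate, because you may list $N$ in any order, and any other placement differs by a permutation of position pairs, which is even.
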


\paragraph*{Similarity of Orientations}

Let $X$ be a set of vertices of $G$.  The \emph{cut} with \emph{shore}
$X$ of  $G$, denoted by $\partial(X)$,  is the set of  edges that have
one   endpoint  in~$X$   and  the   other  in   $\overline{X}$,  where
$\overline{X}$ is the set $V(G) - X$.

Let $D$ be an orientation of $G$ and  let $F$ be a set of arcs of $D$.
We denote by $D \rev F$ the orientation obtained from $D$ by reversing
the orientation of all the arcs of $F$.  Two orientations $D$ and $D'$
are \emph{similar} if $D' = D \rev C$, where $C$ is a cut of $G$.  Two
$k$-orientations $\mathbf{D}$ and  $\mathbf{D}'$ are \emph{similar} if
$D_i$ is similar to $D'_i$ for every $i$, $1 \leq i \leq k$.  The next
result follows from the fact that the parity of the number of edges of
any perfect matching in a cut  $\partial(X)$ is equal to the parity of
$|X|$.

\begin{proposition}
	Let $D$ be an orientation of a graph $G$, let $M$ be a perfect
	matching of  $G$ and let $C  := \partial(X)$ be a  cut of $G$.
	Let $D' = D \rev C$.  Then,
		\[ \sign_D(M) \cdot \sign_{D'}(M) = (-1)^{|X|}. \tag*{\qed} \]
\end{proposition}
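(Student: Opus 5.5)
The plan is to use Proposition~\ref{prop:sign-inversion} with $D' = D \rev C$, so that $D \symdiff D' = C$. This gives
\[
  \sign_D(M)\cdot\sign_{D'}(M) = (-1)^{|M\cap C|}.
\]
It then remains only to show that $|M \cap C| \equiv |X| \pmod 2$ for every perfect matching $M$ of $G$ and every cut $C = \partial(X)$.

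For the parity claim, I would split the vertices of $X$ according to where their $M$-partner lies. Each vertex of $X$ is covered by exactly one edge of $M$. Edges of $M$ with both ends in $X$ cover an even number of vertices of $X$. Edges of $M \cap \partial(X)$ cover exactly one vertex of $X$ each. Hence
\[
  |X| = 2\,|M \cap E(G[X])| + |M \cap \partial(X)|,
\]
so $|M\cap C|$ and $|X|$ have the same parity. Substituting this into the display above yields $\sign_D(M)\cdot\sign_{D'}(M) = (-1)^{|X|}$.

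If one prefers not to rely on Proposition~\ref{prop:sign-inversion}, the identity can be checked directly. Reversing a single arc $uv$ of $M$ swaps $u$ and $v$ in the second row of $\pi_D(M)$. That is a transposition, so it flips the sign. Reversing arcs outside $M$ leaves $\pi_D(M)$ unchanged. So $\sign_D(M)\sign_{D'}(M) = (-1)^{|M\cap(D\symdiff D')|}$, after which the same counting argument applies.

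There is no real obstacle here; the only point needing care is the double-counting identity for $|X|$. A useful remark is that the factor $(-1)^{|X|}$ does not depend on $M$. Consequently similar orientations either preserve every sign or flip every sign, which is how the proposition will be used later.
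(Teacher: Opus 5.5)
Your proof is correct and follows the paper's route: apply Proposition~\ref{prop:sign-inversion} with $D \symdiff D' = C$, then use the fact that $|M \cap \partial(X)| \equiv |X| \pmod 2$. The paper only cites this parity fact, while you prove it by double counting and also check Proposition~\ref{prop:sign-inversion} directly via transpositions; both additions are correct.
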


Let $\mathbf{D}$  and $\mathbf{D'}$  be two  similar $k$-orientations.
If  $\mathbf{D}$   is  pfaffian   with  solution   $\boldalpha$,  then
$\mathbf{D'}$  is  pfaffian.   Indeed,  we  can  modify  $\boldalpha$,
possibly  changing the  sign  of  some of  its  entries,  to obtain  a
solution of $\mathbf{D}'$.

\begin{corollary}\label{cor:similar-k-orientations}
	Let   $\mathbf{D}$   and    $\mathbf{D'}$   be   two   similar
	$k$-orientations.    The   $k$-orientation   $\mathbf{D}$   is
	pfaffian if and only if $\mathbf{D}'$ is pfaffian.  \qed
\end{corollary}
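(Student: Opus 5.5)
By symmetry it suffices to show that if $\mathbf{D}$ is pfaffian then $\mathbf{D}'$ is pfaffian. Similarity is a symmetric relation: if $D'_i = D_i \rev C_i$, then $D_i = D'_i \rev C_i$, since reversing the arcs of the same cut twice restores the original orientation. So the converse follows by exchanging the roles of $\mathbf{D}$ and $\mathbf{D}'$. The plan is to take a solution $\boldalpha$ of $\mathbf{D}$ and produce an explicit solution of $\mathbf{D}'$ by flipping the signs of suitable entries, as the paragraph preceding the corollary suggests.

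For each $i$, $1 \le i \le k$, write $D'_i = D_i \rev \partial(X_i)$ for some set $X_i$ of vertices, and set $\epsilon_i := (-1)^{|X_i|}$. By the preceding proposition, every perfect matching $M$ of $G$ satisfies $\sign_{D_i}(M)\cdot\sign_{D'_i}(M) = \epsilon_i$. Since signs are $\pm 1$, this gives $\sign_{D'_i}(M) = \epsilon_i \sign_{D_i}(M)$. The key point is that $\epsilon_i$ does not depend on $M$. In matrix form,
\[ \boldsign_{\mathbf{D}'} = \boldsign_{\mathbf{D}}\, E, \qquad E := \mathrm{diag}(\epsilon_1,\dots,\epsilon_k), \]
where the rows of both signature matrices are indexed by $\mathcal{M}(G)$ in the same order.

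Now define $\boldalpha' := E\boldalpha$, that is, $\alpha'_i = \epsilon_i\alpha_i$. Because $E^2 = I$,
\[ \boldsign_{\mathbf{D}'}\boldalpha' = \boldsign_{\mathbf{D}} E E \boldalpha = \boldsign_{\mathbf{D}}\boldalpha = \mathbf{1}. \]
Hence $\boldalpha'$ is a solution of $\mathbf{D}'$, and $\mathbf{D}'$ is pfaffian.

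The only point needing care is that the sign change is uniform across all perfect matchings for a fixed $i$. This is exactly the content of the preceding proposition, because every perfect matching uses a number of edges of $\partial(X_i)$ with the parity of $|X_i|$. Once that is in hand, the rest is the routine diagonal rescaling above.
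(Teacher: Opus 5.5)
Your proposal is correct and matches the paper's argument: the paper likewise obtains a solution of $\mathbf{D}'$ from a solution $\boldalpha$ of $\mathbf{D}$ by flipping the signs of the entries $\alpha_i$ for which $|X_i|$ is odd. It justifies this by the preceding proposition that reversing the cut $\partial(X_i)$ multiplies the sign of every perfect matching by the same factor $(-1)^{|X_i|}$. Your diagonal matrix $E$ and the symmetry remark just make this explicit.
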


\paragraph*{Cuts and Contractions}

Let  $X$ be  a set  of vertices  of $G$.   Denote by  $G/X$ the  graph
obtained from $G$ by the contraction of the vertices in the set $X$ to
a  single  vertex  and  removing  the  resulting  loops.   Let  $C  :=
\partial(X)$.  The \emph{$C$-contractions} of $G$ are the graphs $G/X$
and $G/\overline{X}$.

Let $C$  be a  cut of a  matching covered graph  $G$.  The  cut~$C$ is
\emph{separating} if both \mbox{$C$-contractions}  of $G$ are matching
covered.  The cut  $C$ is \emph{tight} if  $|M \cap C| =  1$ for every
perfect  matching  $M$  of  $G$.  The  following  characterization  of
separating   cuts    appears   in    the   book   by    Lucchesi   and
Murty~\cite[Chapter~4]{lm2024}.
\begin{theorem}\label{thm:separating-cut-characterization}
	A cut $C$ of a matching covered graph $G$ is separating if and
	only  if each  edge of  $G$ lies  in a  perfect matching  that
	contains precisely one edge in $C$.  \qed
\end{theorem}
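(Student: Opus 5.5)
We prove the two directions of Theorem~\ref{thm:separating-cut-characterization} separately, writing $X$ for a shore of $C$, so that $C = \partial(X)$, $G_1 := G/\overline{X}$ and $G_2 := G/X$. We denote by $\overline{x}$ and $x$ the contraction vertices of $G_1$ and $G_2$, respectively. The basic tool is the correspondence between perfect matchings of $G$ that meet $C$ in exactly one edge and pairs of perfect matchings of the two $C$-contractions that agree on $C$. If $M$ is a perfect matching of $G$ with $M \cap C = \{e\}$, then $M \cap E(G_1)$, with $e$ as the edge covering $\overline{x}$, is a perfect matching $M_1$ of $G_1$; similarly we obtain a perfect matching $M_2$ of $G_2$, and $M_1 \cap C = M_2 \cap C = \{e\}$. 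Conversely, given perfect matchings $M_1$ of $G_1$ and $M_2$ of $G_2$ that both use the same edge $e \in C$, their union is a perfect matching of $G$ meeting $C$ only in $e$. Both claims hold because every perfect matching of a $C$-contraction meets $C$ in exactly one edge, namely the edge covering the contraction vertex.

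\emph{Sufficiency.} Suppose that every edge of $G$ lies in a perfect matching meeting $C$ in exactly one edge. Consider an edge $f$ of $G_1$, and choose such a matching $M$ of $G$ containing $f$ (viewed as an edge of $G$). By the correspondence, $M$ restricts to a perfect matching $M_1$ of $G_1$ containing $f$. Hence every edge of $G_1$ is matchable, and $G_1$ has at least one edge. It remains to check connectivity. Since $G$ is connected, every vertex of $X$ is joined to $\overline{x}$ in $G_1$ by the image of a path of $G$: follow the path until it first leaves $X$, and that edge then ends at $\overline{x}$. Therefore $G_1$ is matching covered, and by symmetry so is $G_2$. (If $X$ or $\overline{X}$ were empty, $C$ would be empty and no perfect matching could meet it in exactly one edge, so we may assume both shores are nonempty.)

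\emph{Necessity.} Suppose that $G_1$ and $G_2$ are matching covered, and let $f$ be an edge of $G$. There are two cases. If $f \in C$, then $f$ is an edge of both contractions. Choose a perfect matching $M_1$ of $G_1$ containing $f$ and a perfect matching $M_2$ of $G_2$ containing $f$; gluing them gives the required matching of $G$. If instead $f$ has both ends in $X$, say, then $f$ is an edge of $G_1$. Choose a perfect matching $M_1$ of $G_1$ containing $f$, and let $e$ be the edge of $M_1$ covering $\overline{x}$; note that $e \in C$. Since $G_2$ is matching covered, $e$ is matchable in $G_2$, so some perfect matching $M_2$ of $G_2$ contains $e$. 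Then $M_1 \cup M_2$ is a perfect matching of $G$ containing $f$ and meeting $C$ only in $e$.

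The only delicate point is making the gluing correspondence precise: edges of $C$ must be identified in $G$, $G_1$ and $G_2$, and parallel edges created by contraction must be retained rather than merged. This is needed even though $G$ is simple; only loops are removed. Once this convention is fixed, everything else is routine bookkeeping.
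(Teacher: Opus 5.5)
Your proof is correct. The paper gives no proof of this statement and only cites Lucchesi and Murty's book. Your argument is the standard direct one: restrict a perfect matching $M$ with $|M \cap C| = 1$ to the two $C$-contractions, glue perfect matchings of $G/\overline{X}$ and $G/X$ that share their edge in $C$, and get connectivity of the contractions from that of $G$. You also correctly handle the degenerate case $C = \emptyset$ and the retention of parallel edges under contraction.
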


It is then easy  to see that every tight cut is  separating.  A cut is
\emph{trivial}  if there  is a  single vertex  in one  of its  shores.
Notice that every trivial cut is also tight.  A matching covered graph
free  of   non-trivial  tight  cuts   is  a  \emph{brick}  if   it  is
non-bipartite and is a \emph{brace} if it is bipartite.  A \emph{tight
  cut decomposition} of  a graph is a collection of  bricks and braces
obtained by  recursively applying  a $C$-contraction of  a non-trivial
tight cut $C$ whenever possible.  In 1987, Lovász~\cite{lova87} proved
that every  tight cut decomposition of  a graph is, in  a certain way,
unique.

\begin{theorem}[Lovász,~\cite{lova87}]
	Every application of the tight cut decomposition to a matching
	covered graph $G$ produces the  same set of bricks and braces,
	up to multiple edges.  \qed
\end{theorem}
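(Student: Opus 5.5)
The statement is Lovász's theorem, the uniqueness of the tight cut decomposition. I would prove it by induction on $|V(G)|$, following the standard route: compare two tight cut decompositions that start with different non-trivial tight cuts $C$ and $D$.

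If $G$ has no non-trivial tight cut, the decomposition is just $\{G\}$ and there is nothing to prove. So assume two decompositions begin with non-trivial tight cuts $C=\partial(X)$ and $D=\partial(Y)$. By induction on the number of vertices, each $C$-contraction has a unique multiset of bricks and braces. The list for $G$ obtained via $C$ is therefore the union of the lists of $G/X$ and $G/\overline{X}$, and the same holds for $D$. It suffices to show that these two unions agree up to multiple edges.

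\textbf{Case 1: $C$ and $D$ do not cross.} After possibly complementing shores, $X\subseteq Y$ or $X\cap Y=\emptyset$. In this case $D$ remains a tight cut in the $C$-contraction that contains it. Indeed, $|M\cap D|=1$ for every perfect matching $M$ of the contraction, since each such $M$ extends to a perfect matching of $G$. The same holds for $C$ in the appropriate $D$-contraction. Performing both contractions in either order yields the same three graphs, up to multiple edges. Applying the induction hypothesis to each intermediate graph then gives equality of the lists. (If $D$ becomes trivial in the contraction, the two graphs coincide up to multiple edges.)

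\textbf{Case 2: $C$ and $D$ cross.} This is the main obstacle. I would use the uncrossing lemma: if $|X\cap Y|$ is odd, then $\partial(X\cap Y)$ and $\partial(X\cup Y)$ are tight. The proof counts, for any perfect matching $M$,
\[
|M\cap\partial(X\cap Y)|+|M\cap\partial(X\cup Y)|\le |M\cap C|+|M\cap D|=2 .
\]
Both terms on the left are odd, so each equals $1$. If $|X\cap Y|$ is even, replace $Y$ by $\overline{Y}$.

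Next I need a non-trivial tight cut that crosses neither $C$ nor $D$. The cut $\partial(X\cap Y)$ is non-crossing with both, since $X\cap Y$ is contained in $X$ and in $Y$. If it is non-trivial, Case 1 applied to the pairs $(C,\partial(X\cap Y))$ and $(\partial(X\cap Y),D)$ shows all three decompositions agree. If it is trivial, try $\partial(X\cup Y)$ instead.

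If both $\partial(X\cap Y)$ and $\partial(X\cup Y)$ are trivial, then $X\cap Y$ and $\overline{X\cup Y}$ are single vertices. In this degenerate situation I would compare the contractions directly. Here $G/X$ and $G/Y$ (and likewise the complements) turn out to be isomorphic up to multiple edges, because both arise from the same smaller graph by identifying a single vertex. Checking this degenerate configuration carefully is the delicate part, and I would verify it explicitly by tracking the shores $X-Y$ and $Y-X$.
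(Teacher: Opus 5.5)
The paper gives no proof of this statement; it is quoted from Lovász~\cite{lova87}, so your proposal is measured against the standard argument. Your outline follows that route: induction, the laminar case, then uncrossing. Case~1 is sound, and so is the reduction of the crossing case to the configuration $X\cap Y=\{v\}$, $\overline{X}\cap\overline{Y}=\{w\}$.

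The gap is exactly the step you defer, and the claim you make there is false. $G/X$ and $G/Y$ need not be isomorphic, even up to multiple edges: $G/X$ has $|Y-X|+2$ vertices while $G/Y$ has $|X-Y|+2$. For example, in $C_8$ take $v,w$ at distance $3$. Let $X$ be $v$ together with the two inner vertices of the short $v$--$w$ path, and let $Y$ be $v$ together with the four inner vertices of the long one. These give crossing non-trivial tight cuts in precisely your degenerate configuration, yet $G/X\cong C_6$ and $G/Y\cong C_4$. The correct pairing is crossed: $G/X$ goes with $G/\overline{Y}$, and $G/\overline{X}$ goes with $G/Y$. Your intuition about a common smaller graph is right only for this crossed pairing.

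Proving the crossed pairing needs a fact that your inequality throws away. With $A:=X-Y$ and $B:=Y-X$, the exact count is
\[ |M\cap\partial(X\cap Y)|+|M\cap\partial(X\cup Y)|+2\,|M\cap E(A,B)| = |M\cap C|+|M\cap D| = 2 . \]
Besides the tightness of the two new cuts, this shows that no perfect matching uses an edge between $A$ and $B$. Since $G$ is matching covered, there are no such edges at all.

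Consequently the two graphs share a common core, $G[B\cup\{v,w\}]$:
\begin{itemize}
\item $G/X$ is obtained by contracting $A\cup\{v\}$ and naming the new vertex $v$. It equals $G[B\cup\{v,w\}]$ plus extra $v$--$w$ edges coming from $E(A,w)$.
\item $G/\overline{Y}$ is obtained by contracting $A\cup\{w\}$ and naming the new vertex $w$. It equals $G[B\cup\{v,w\}]$ plus extra $v$--$w$ edges coming from $E(A,v)$.
\end{itemize}
Symmetrically, $G/\overline{X}$ and $G/Y$ agree, with $A$ and $B$ exchanged.

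For these to agree up to multiplicity, you also need $E(A,v)$ and $E(A,w)$ both nonempty, and likewise for $B$. This holds because, for every perfect matching $M$, $|M\cap\partial(A)|$ is even and at most $2$. If one of the two sets were empty, then $|M\cap\partial(A)|=0$ for every $M$, so no edge of $\partial(A)$ would be matchable. But $\partial(A)$ is nonempty because $G$ is connected, contradicting that $G$ is matching covered.

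Finally, all these identifications hold only up to edge multiplicities. Your induction should therefore range over loopless multigraphs, using the fact that tightness of a cut depends only on which pairs of vertices are adjacent.
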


\paragraph*{Linear Algebra}

The following basic result, which is a consequence of the Rank-Nullity
Theorem, will be useful in Section~\ref{sec:conformal-subgraphs}.
\begin{lemma}\label{lem:rank-upperbound}
	Let $\mathbf{A}$ be a matrix with $n$ columns.  Let $\mathbf{A
	  x} = \mathbf{b}$ be a linear system.  If this system has $t$
	linearly independent solutions, then $\rank(\mathbf{A}) \leq n
	- t + 1$.  \qed
\end{lemma}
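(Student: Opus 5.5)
The plan is to reduce the statement to the Rank-Nullity Theorem applied to $\mathbf{A}$. Let $\mathbf{x}_1, \dots, \mathbf{x}_t$ be the $t$ linearly independent solutions of $\mathbf{A x} = \mathbf{b}$. The natural candidates for vectors in the null space of $\mathbf{A}$ are the differences $\mathbf{y}_i := \mathbf{x}_i - \mathbf{x}_1$ for $2 \le i \le t$. Each of them satisfies $\mathbf{A y}_i = \mathbf{b} - \mathbf{b} = \mathbf{0}$, so all of them lie in $\ker(\mathbf{A})$.

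The key step is to show that $\mathbf{y}_2, \dots, \mathbf{y}_t$ are linearly independent. Suppose $\sum_{i=2}^t \lambda_i \mathbf{y}_i = \mathbf{0}$. Expanding gives
\[
\sum_{i=2}^t \lambda_i \mathbf{x}_i - \Bigl(\sum_{i=2}^t \lambda_i\Bigr)\mathbf{x}_1 = \mathbf{0}.
\]
This is a linear combination of $\mathbf{x}_1, \dots, \mathbf{x}_t$ equal to zero. By the independence of these vectors, every coefficient vanishes, so each $\lambda_i = 0$.

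It follows that $\dim\ker(\mathbf{A}) \ge t - 1$. The Rank-Nullity Theorem for a matrix with $n$ columns then gives
\[
\rank(\mathbf{A}) = n - \dim\ker(\mathbf{A}) \le n - t + 1.
\]

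No step is a real obstacle. The only care needed is the degenerate case $t = 0$ (or, more generally, $t \le 1$): there the bound $\rank(\mathbf{A}) \le n - t + 1$ holds trivially because $\rank(\mathbf{A}) \le n$. I would note that case separately before running the argument above for $t \ge 2$.
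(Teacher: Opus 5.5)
Your proof is correct and follows essentially the route the paper indicates. The paper gives no proof and only remarks that the lemma is a consequence of the Rank-Nullity Theorem; your argument supplies exactly that, using the $t-1$ independent differences $\mathbf{x}_i - \mathbf{x}_1$ in $\ker(\mathbf{A})$.
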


The \emph{Hadamard product} of two vectors $\mathbf{u}, \mathbf{v} \in
\mathbb{R}^n$, denoted by $\mathbf{u} \odot \mathbf{v}$, is the vector
obtained   by  position-wise   product.   Namely,   $\mathbf{u}  \odot
\mathbf{v} = (u_i \cdot v_i)_{1 \leq i \leq n}$.  Let $\mathcal{S}$ be
a  set  of vectors in $\mathbb{R}^n$.   The  Hadamard  product  of  all the  vectors  in
$\mathcal{S}$  is  denoted   by  $\prod_{\mathbf{v}  \in  \mathcal{S}}
\mathbf{v}$.

We denote the  $i$-th row of a matrix  $\mathbf{A}$ by $\mathbf{A}_i$.
The  \emph{Khatri-Rao   product}  of  two  matrices   $\mathbf{A}  \in
\mathbb{R}^{m_1 \times n}$ and  $\mathbf{B} \in \mathbb{R}^{m_2 \times
  n}$, denoted by  $\mathbf{A} \ast \mathbf{B}$, is  a matrix obtained
by  the  Hadamard product  of  pairs  of  rows from  $\mathbf{A}$  and
$\mathbf{B}$.  Formally,  the product $\mathbf{A} \ast  \mathbf{B}$ is
the matrix  $\mathbf{C} \in  \mathbb{R}^{(m_1m_2)\times n}$  such that
$\mathbf{C}_{(i - 1)\cdot m_2 +  j} = \mathbf{A}_i \odot \mathbf{B}_j$
for  every $i$, $1 \leq i \leq m_1$,  and every $j$,  $1 \leq j \leq m_2$.

It turns out that the products defined above are useful for describing
properties of signs of matchings  and cycles in $k$-orientations.
For example, we  can easily generalize  Proposition~\ref{prop:sign-product} to
$k$-orientations.
Let
$\mathbf{D}$  be  a $k$-orientation  of  $G$.   The \emph{sign}  of  a
conformal cycle $Q$ in $\mathbf{D}$ is
	\[\boldsign_\mathbf{D}(Q) := (\sign_{D_1}(Q), \sign_{D_2}(Q), \dots, \sign_{D_k}(Q)).\]
\begin{proposition}\label{prop:boldsign-product}
	Let $G$ be  a graph and let $\mathbf{D}$  be a $k$-orientation
	of $G$.  Let $M$ and $N$ be perfect matchings of $G$.  Then,
		\[ \boldsign_\mathbf{D}(M) \odot \boldsign_\mathbf{D}(N) = \prod_{Q \in \mathcal{C}} \boldsign_\mathbf{D}(Q),\]
	where  $\mathcal{C}$ is  the set  of all  $(M, N)$-alternating
	cycles.  \qed
\end{proposition}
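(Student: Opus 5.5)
This follows coordinatewise from Proposition~\ref{prop:sign-product}. Write $\mathbf{D} = (D_1, D_2, \dots, D_k)$ and fix an index $i$, $1 \leq i \leq k$. By definition, the $i$-th entry of $\boldsign_\mathbf{D}(M) \odot \boldsign_\mathbf{D}(N)$ is $\sign_{D_i}(M) \cdot \sign_{D_i}(N)$. Likewise, the $i$-th entry of the Hadamard product $\prod_{Q \in \mathcal{C}} \boldsign_\mathbf{D}(Q)$ is $\prod_{Q \in \mathcal{C}} \sign_{D_i}(Q)$, because the Hadamard product acts position-wise.

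Applying Proposition~\ref{prop:sign-product} to the single orientation $D_i$ and the perfect matchings $M$ and $N$, we get
\[ \sign_{D_i}(M) \cdot \sign_{D_i}(N) = \prod_{Q \in \mathcal{C}} \sign_{D_i}(Q). \]
The set $\mathcal{C}$ of $(M,N)$-alternating cycles depends only on $M$ and $N$, not on the orientation. So the same set $\mathcal{C}$ appears for every $i$.

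The two vectors therefore agree in every coordinate, and the identity follows. One point needs checking: each $Q \in \mathcal{C}$ is conformal, so that $\sign_{D_i}(Q)$ and hence $\boldsign_\mathbf{D}(Q)$ are defined. This holds because $Q$ is $M$-alternating: $V(Q)$ is covered by $M \cap E(Q)$, and $M - E(Q)$ is a perfect matching of $G - V(Q)$. There is also the degenerate case $\mathcal{C} = \emptyset$, which occurs when $M = N$. Then the empty Hadamard product is the all-ones vector, and the left-hand side is also all ones since each sign is $\pm 1$. No step here is a real obstacle; the only care needed is the empty-product convention.
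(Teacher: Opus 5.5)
Your proof is correct and matches the paper's approach: the paper gives no proof and presents the statement as an immediate coordinatewise generalization of Proposition~\ref{prop:sign-product}, which is exactly what you carry out. Your side remarks on why each $(M,N)$-alternating cycle is conformal and on the empty-product case $M = N$ are accurate.
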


\subsection{Outline of the Paper}

In  Section~\ref{sec:related-work},  we  review a  recently  developed
method for  counting perfect matchings: the  symbolic pfaffian method.
The known results of lower bounds for this method are similar to those
obtained  in this  paper  for  the pfaffian  number.   We discuss  the
similarities  of the  symbolic  pfaffian method  and the  $k$-pfaffian
method.  Also, we  show some implications of the lower  bounds for the
symbolic   pfaffian    method   to    the   pfaffian    numbers.    In
Section~\ref{sec:conformal-subgraphs},  we present  a lower  bound for
the pfaffian number of a graph  related to the pfaffian numbers of its
conformal subgraphs.  As an intermediate step, we prove an upper bound
for the  rank of two matrices  related to their Khatri-Rao  product, a
result of independent relevance.  Then,  we conclude that the pfaffian
number  is unbounded  by showing  an  infinite family  of graphs  with
increasing pfaffian number.   In Section~\ref{sec:separating-cuts}, we
show that  the pfaffian  number of  a graph is  at least  the pfaffian
number of its separating cut contractions.  Also,
we  discuss the  relation  of  this result  with  previous results  on
pfaffian   graphs.   Finally,   in  Section~\ref{sec:conclusion},   we
conclude with some final remarks about these new results.

\section{Related Work: The Symbolic Pfaffian Method}
\label{sec:related-work}

In 2000, Tesler~\cite{tesl00} proved the assertion of Kasteleyn that implies that graphs embeddable in orientable sufaces of genus $g$ are $4^g$-pfaffian.
The prove by Tesler, as he noticed, also implied that the number of perfect matchings of graphs embeddable in surfaces of genus~$g$ can be computed via evaluation of the pfaffian of a matrix with symbolic entries.
The   symbolic  pfaffian   method for counting perfect matchings  was then formally described in 2017,  by   Babenko  and
Vyalyi~\cite{bavy17}, for bipartite  graphs.  Then,  in 2021,
Vyalyi~\cite{vyalyi21} generalized  this method for  arbitrary graphs.
Some of the  notation below differs from the notation of Babenko and Vyalyi for the sake of
comparison with the $k$-pfaffian method.

Let $G$  be a graph and  let $d$ be an  integer.
Consider the vector space $\gf(2)^d$ formed by the binary vectors of dimension $d$.
Denote the sum in this vector space by $\oplus$.
A \emph{$d$-symbolic function}  $\tau \colon E(G) \rightarrow \gf(2)^d$ of $G$ is a function defined in the edges of $G$ and that extends to perfect matchings by $\tau(M) =  \bigoplus_{e \in  M} \tau(e)$ for every $M \in \mathcal{M}(G)$.
A \emph{$d$-symbolic orientation}  of~$G$ is a pair
$(D,  \tau)$, where  $D$  is an  orientation of~$G$  and  $\tau$ is  a
$d$-symbolic  function   of  $G$.    A  $d$-symbolic   orientation  is
\emph{pfaffian} if, for every pair  of perfect matchings $M$ and $N$ of~$G$,
	\[\sign_D(M) \neq \sign_D(N) \implies  \tau(M)  \neq \tau(N).\]
Every graph~$G$ admits a pfaffian $|E(G)|$-symbolic orientation.
Indeed, a \mbox{$|E(G)|$-symbolic} function which maps every edge to a different vector in the standard basis implies that $\tau(M) \neq \tau(N)$ for every two different perfect matchings $M$ and $N$ of $G$.
The
\emph{symbolic pfaffian number}  of~$G$, denoted by  $\spf(G)$, is the
smallest  integer $d$  such that  $G$ admits  a pfaffian  $d$-symbolic
orientation.   In  particular,  the  symbolic  pfaffian  number  of  a
pfaffian graph is 0.

Given a pfaffian  $d$-symbolic orientation $(D, \tau)$ of  $G$, we can
count  the number  of perfect  matchings  of $G$  with a  parameterized
algorithm.    Let  $\mathcal{R}_d   =   \mathbb{Z}[t_1,  t_2,   \dots,
  t_d]/(t_i^2  - 1)$  be a polynomial  ring in  $d$ variables  with
exponents modulo  2.  Associate  to every  edge of  $G$ a  monomial in
$\mathcal{R}_d$ with coefficient 1  and exponents corresponding to the
entries  of $\tau(e)$.   Let~$\boldtau{(D)}$  be the  skew-symmetric
matrix obtained by multiplying  the nonzero entries of~$\mathbf{A}_\mathbf{D}$
by the monomials associated with the corresponding edges.  The number of
perfect matchings of $G$ is equal to the sum of the absolute values of
the coefficients of $\Pf(\boldtau{(D)})$.   
Notice, however, that determinant-based algorithms for computing the pfaffian are not suitable for matrices over $R_d$.
Nevertheless, there  are efficient combinatorial algorithms for
computing    the   pfaffian  directly and   without using divisions~\cite{msv99}.
As the intermediate operations of such algorithms
are executed over polynomials with at most~$2^d$ terms, the complexity
of this method is $\mathcal{O}(2^d \mathrm{poly}(n))$.

Babenko  and  Vyalyi~\cite{bavy17} also  extended  this  method for  a
polynomial  ring  without  restrictions  in  the  exponents  of  their
variables.  However,  the most  interesting results  are for  the case
described here, with exponents modulo 2.  Also, Vyalyi~\cite{vyalyi21}
considered a  generalization of  this method that  uses more  than one
symbolic orientation.   The results  obtained for  that generalization
hold, in particular, for the version described here.

In 2021, Vyalyi~\cite{vyalyi21} obtained lower bounds for the symbolic
pfaffian number of  arbitrary graphs.  Let $\Pf^*(G)$  be the greatest
absolute  value  of  $\Pf(\mathbf{A_D})$  among  all  orientations~$D$
of~$G$.
\begin{theorem}[Vyalyi,~\cite{vyalyi21}]\label{thm:vyalyi-lower-bound}
	Let $G$ be a graph.  Then,
		\[ \spf(G) \geq \dfrac{1}{2}\log_2\dfrac{|\mathcal{M}(G)|}{\Pf^*(G)}. \tag*{\qed} \]
\end{theorem}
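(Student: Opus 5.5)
Fix a pfaffian $d$-symbolic orientation $(D,\tau)$ of $G$ with $d=\spf(G)$. The plan is to bound $|\mathcal{M}(G)|$ by $4^d\,\Pf^*(G)$, which rearranges to the statement. Write $\tau(M)=s\in\gf(2)^d$ and partition $\mathcal{M}(G)$ into the classes $\mathcal{M}_s=\{M:\tau(M)=s\}$. By the pfaffian condition, all matchings in one class have the same sign in $D$; call it $\epsilon_s$. Then
\[ |\mathcal{M}(G)| = \sum_{s} |\mathcal{M}_s| = \sum_s \epsilon_s \sum_{M\in\mathcal{M}_s}\sign_D(M). \]

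To isolate each class, use characters of $\gf(2)^d$. For $c\in\gf(2)^d$, let $D_c$ be the orientation obtained from $D$ by reversing every edge $e$ with $c\cdot\tau(e)=1$. By Proposition~\ref{prop:sign-inversion},
\[ \sign_{D_c}(M)=\sign_D(M)\,(-1)^{c\cdot\tau(M)}. \]
Hence
\[ \Pf(\mathbf{A}_{D_c})=\pm\sum_M \sign_D(M)(-1)^{c\cdot\tau(M)}, \]
where the global sign $\pm$ is fixed by the vertex enumeration and does not depend on $c$.

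Define $P_s=\sum_{M\in\mathcal{M}_s}\sign_D(M)$. Fourier inversion on $\gf(2)^d$ gives
\[ P_s = \pm\,2^{-d}\sum_c (-1)^{c\cdot s}\,\Pf(\mathbf{A}_{D_c}), \]
so $|P_s|\le \Pf^*(G)$. Since $|\mathcal{M}_s|=|P_s|$, summing over the $2^d$ classes yields
\[ |\mathcal{M}(G)|\le 2^d\,\Pf^*(G). \]
This is actually stronger than needed: it gives $\spf(G)\ge \log_2(|\mathcal{M}(G)|/\Pf^*(G))$, which implies the stated bound because the logarithm is nonnegative. That holds since some orientation attains $|\Pf(\mathbf{A}_{D_c})|\le|\mathcal{M}(G)|$, and in any case $\Pf^*\le|\mathcal{M}|$ always.

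The main point to check is that the inversion bound is legitimate. The key fact is that $|P_s|=2^{-d}\bigl|\sum_c(\pm1)\Pf(\mathbf{A}_{D_c})\bigr|\le 2^{-d}\cdot 2^d\,\Pf^*(G)$, and this is where the factor is controlled. Should this stronger bound fail somewhere, I would fall back to Cauchy--Schwarz: Parseval gives $\sum_s P_s^2 = 2^{-d}\sum_c \Pf(\mathbf{A}_{D_c})^2\le \Pf^{*2}$, and then
\[ \sum_s|P_s|\le 2^{d/2}\Bigl(\sum_s P_s^2\Bigr)^{1/2}\le 2^{d/2}\,\Pf^*(G), \]
which is exactly $\spf(G)\ge\tfrac12\log_2(|\mathcal{M}(G)|/\Pf^*(G))$, matching the statement's constant.
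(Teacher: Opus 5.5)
Your argument is correct. The paper does not prove this statement; it quotes it from Vyalyi. So yours is an independent, self-contained route.

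\textbf{Main argument.} The key observation is that, by character inversion on $\gf(2)^d$, the coefficient $P_s$ is a signed average of the ordinary pfaffians $\Pf(\mathbf{A}_{D_c})$ of the $2^d$ orientations $D_c$. This gives $|\mathcal{M}_s|=|P_s|\le\Pf^*(G)$, hence $|\mathcal{M}(G)|\le 2^{\spf(G)}\Pf^*(G)$, i.e.\ $\spf(G)\ge\log_2(|\mathcal{M}(G)|/\Pf^*(G))$. That is twice the stated bound. The stated bound follows because \emph{every} orientation satisfies $|\Pf(\mathbf{A}_D)|=|\sum_M\sign_D(M)|\le|\mathcal{M}(G)|$; your phrase ``some orientation attains'' has the wrong quantifier. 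Your inequality also shows $\Pf^*(G)\ge 1$ for matchable $G$, so the ratio is well defined.

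\textbf{Fallback.} This part is mis-rearranged. The inequality $|\mathcal{M}(G)|\le 2^{d/2}\Pf^*(G)$ gives $\spf(G)\ge 2\log_2(|\mathcal{M}(G)|/\Pf^*(G))$, which is four times the stated constant, not ``exactly'' it. The constant $\tfrac12$ corresponds to $4^d$, as you noted at the start. So Parseval plus Cauchy--Schwarz is your strongest bound rather than a weaker safety net, and both versions are valid.

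\textbf{Comparison with the cited result.} The paper says Vyalyi establishes his bounds for a more general model with several symbolic orientations, which may explain his weaker constant. Your argument covers only the single-orientation method defined here, but it yields sharper constants and a byproduct. Let $\epsilon_s$ be the common sign of $\mathcal{M}_s$ (arbitrary if $\mathcal{M}_s=\varnothing$), and set $\alpha_c:=2^{-d}\sum_s\epsilon_s(-1)^{c\cdot s}$. Then $\sum_c\alpha_c\sign_{D_c}(M)=1$ for every $M$. So $(D_c)_c$ is a pfaffian $2^d$-orientation and $\pf(G)\le 2^{\spf(G)}$, a counterpart to Theorem~\ref{thm:pf-and-spf}.
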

\begin{corollary}[Vyalyi,~\cite{vyalyi21}]\label{cor:vyalyi-complete-graphs}
	$\spf(K_{2n}) = \Omega(n \lg n)$ \qed
\end{corollary}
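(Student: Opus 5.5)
The plan is to apply Theorem~\ref{thm:vyalyi-lower-bound} directly to $G = K_{2n}$. This needs a lower bound on $|\mathcal{M}(K_{2n})|$ and an upper bound on $\Pf^*(K_{2n})$ whose ratio is $2^{\Omega(n\lg n)}$.

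First, I count the perfect matchings: $|\mathcal{M}(K_{2n})| = (2n-1)!! = \frac{(2n)!}{2^n n!}$. By Stirling's formula (or the crude bounds $(m/e)^m \le m! \le m^m$),
\[ \log_2 (2n-1)!! \;\geq\; n\log_2 n - O(n). \]
A cleaner alternative is $(2n-1)!! \ge \prod_{i=1}^{n} (2i-1) \ge (n/e)^n$, which already gives the same bound.

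Second, and this is the only step with any content, I bound $\Pf^*(K_{2n})$. Let $D$ be any orientation of $K_{2n}$. Since $\Pf(\mathbf{A_D})^2 = \det(\mathbf{A_D})$, Hadamard's inequality gives
\[ |\det \mathbf{A_D}| \;\le\; \prod_{i=1}^{2n} \|\mathbf{(A_D)}_i\|_2 \;=\; \bigl(\sqrt{2n-1}\bigr)^{2n} \;=\; (2n-1)^n . \]
The equality holds because every row of $\mathbf{A_D}$ has exactly $2n-1$ entries equal to $\pm 1$ and a single zero on the diagonal. Hence $\Pf^*(K_{2n}) \le (2n-1)^{n/2}$, so $\log_2 \Pf^*(K_{2n}) \le \frac{n}{2}\log_2 n + O(n)$.

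Finally, I combine the two bounds:
\[ \spf(K_{2n}) \;\ge\; \tfrac12\bigl(n\log_2 n - \tfrac{n}{2}\log_2 n - O(n)\bigr) \;=\; \tfrac14 n\log_2 n - O(n) \;=\; \Omega(n\lg n). \]
The main point to get right is that Hadamard's bound loses only a factor of roughly $\sqrt{\cdot}$ in the exponent, namely $\frac n2 \log n$ against $n\log n$. I therefore only need to check that the constant in front of $n\log n$ stays positive, which it does: it is $\frac14$.
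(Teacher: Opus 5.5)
Your proof is correct. The paper gives no argument of its own beyond citing Vyalyi, but the corollary is meant to follow from Theorem~\ref{thm:vyalyi-lower-bound} exactly as you derive it: combining $|\mathcal{M}(K_{2n})| = (2n-1)!! \geq (n/e)^n$ with the Hadamard bound $\Pf^*(K_{2n}) \leq (2n-1)^{n/2}$ yields $\spf(K_{2n}) \geq \frac{1}{4} n\log_2 n - O(n)$. A minor remark: the first inequality in your ``cleaner alternative'' is actually an equality, and the second one follows from $2i-1 \geq i$ together with $n! \geq (n/e)^n$.
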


We noticed that we can build  a pfaffian $d$-symbolic orientation of a
graph given a  pfaffian $(d + 1)$-orientation.  This  implies that the
pfaffian number of  a graph is lower bounded by  its symbolic pfaffian
number as we prove below.

\begin{theorem}\label{thm:pf-and-spf}
	Let $G$ be a graph.  Then, $\pf(G) \geq \spf(G) + 1$.
\end{theorem}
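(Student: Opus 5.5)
The plan is to turn a pfaffian $k$-orientation directly into a pfaffian $(k-1)$-symbolic orientation. The symbolic function will record, for each edge, in which of the orientations $D_2,\dots,D_k$ that edge is reversed relative to $D_1$. Let $k := \pf(G)$ and fix a pfaffian $k$-orientation $\mathbf{D} = (D_1, \dots, D_k)$ of $G$ with solution $\boldalpha$. Set $d := k-1$. Define $\tau \colon E(G) \to \gf(2)^d$ by letting the $i$-th coordinate of $\tau(e)$ be $1$ exactly when $e \in D_1 \symdiff D_{i+1}$, for $1 \le i \le d$. We will show that $(D_1, \tau)$ is a pfaffian $d$-symbolic orientation, which gives $\spf(G) \le d = \pf(G) - 1$.

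First we express every sign in terms of $D_1$ and $\tau$. For a perfect matching $M$, the $i$-th coordinate of $\tau(M) = \bigoplus_{e\in M}\tau(e)$ is the parity of $|M \cap (D_1 \symdiff D_{i+1})|$. By Proposition~\ref{prop:sign-inversion},
\[ \sign_{D_{i+1}}(M) = \sign_{D_1}(M)\cdot(-1)^{\tau(M)_i}. \]
Define the vector $\mathbf{v}(M) := (1, (-1)^{\tau(M)_1}, \dots, (-1)^{\tau(M)_d})$. Then
\[ \boldsign_{\mathbf{D}}(M) = \sign_{D_1}(M)\,\mathbf{v}(M), \]
and $\mathbf{v}(M)$ depends only on $\tau(M)$.

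Next we check the pfaffian condition in contrapositive form. Suppose $\tau(M) = \tau(N)$, so that $\mathbf{v}(M) = \mathbf{v}(N) =: \mathbf{v}$. Since $\boldalpha$ is a solution of~\eqref{eq:signature-matrix-system}, the rows of $M$ and $N$ give
\[ \sign_{D_1}(M)\,(\mathbf{v}\cdot\boldalpha) = 1 \quad\text{and}\quad \sign_{D_1}(N)\,(\mathbf{v}\cdot\boldalpha) = 1. \]
Hence $\mathbf{v}\cdot\boldalpha \neq 0$, and dividing gives $\sign_{D_1}(M) = \sign_{D_1}(N)$. Therefore $\sign_{D_1}(M)\ne\sign_{D_1}(N)$ implies $\tau(M)\ne\tau(N)$, which is exactly the pfaffian condition for $(D_1,\tau)$.

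The degenerate case $k=1$ is consistent with this argument: then $d=0$, $\tau$ is constant, and the condition says that $D_1$ is a pfaffian orientation, so $\spf(G)=0$. There is no real obstacle here. The only point needing care is keeping the indexing of $\tau$ against $D_2,\dots,D_k$ straight, and noting that $\mathbf{v}\cdot\boldalpha$ is forced to be nonzero.
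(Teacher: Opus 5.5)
Your proof is correct and is essentially the paper's argument. The paper takes $D_k$ rather than $D_1$ as the reference orientation, sets $\tau(e)_i = 1$ iff $e \in D_i \symdiff D_k$, and uses Proposition~\ref{prop:sign-inversion} to show that $\tau(M)=\tau(N)$ together with $\sign_{D_k}(M)\neq\sign_{D_k}(N)$ would force $\boldsign_{\mathbf{D}}(M)+\boldsign_{\mathbf{D}}(N)=0$, contradicting solvability of~\eqref{eq:signature-matrix-system}; the only other difference is that the paper treats $k=1$ as a separate trivial case, whereas you absorb it as $d=0$.
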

\begin{proof}
	Let  $k  :=  \pf(G)$.   
	If $k = 1$, then $G$ is pfaffian, $\spf(G) = 0$ and the theorem holds trivially.
	Suppose then that $G$ is non-pfaffian and hence $k > 1$.
	Let  $\mathbf{D}$  be  a  $k$-pfaffian
	orientation  of~$G$.  Let  $\tau$  be  a $(k  -  1)$-symbolic
	function of $G$ defined by
		\[ 
		\tau(e)_i = \left\{
			\begin{array}{rl}
				1, & \text{if } e \in D_i \symdiff D_k \\  
				0, & \text{otherwise} \\
			\end{array}
		\right.
		\]
	for every edge $e \in E(G)$ and $i$, $1 \leq i \leq k - 1$.  Then,
	by definition, for every perfect  matching~$M$ of $G$, we have
	that $\tau(M)_i \equiv  |M \cap (D_i \symdiff  D_k)| \pmod 2$.
	We will  show that the  $(k - 1)$-symbolic  orientation $(D_k,
	\tau)$ is pfaffian.

	Since $G$ is not pfaffian, there are perfect matchings of $G$, say $M$ and $N$, with different signs in~$D_k$.
	Assume, by  contradiction, that  $\tau(M) =
	\tau(N)$.  Then, by the construction of $\tau$, we have that
		\[ |M \cap (D_i \symdiff D_k)| \equiv |N \cap (D_i \symdiff D_k)| \pmod 2, \]
	for    every   $i$, $1 \leq i \leq k - 1$.    Therefore,    by
	Proposition~\ref{prop:sign-inversion}, we obtain that
		\[ \sign_{D_i}(M) \cdot \sign_{D_k}(M) = \sign_{D_i}(N) \cdot \sign_{D_k}(N). \]
	Given that the signs of $M$ and $N$ in $D_k$ are different, we
	know that  $\sign_{D_i}(M) \neq \sign_{D_i}(N)$.   We conclude
	that  the signs  of  $M$  and $N$  are  different  in all  the
	orientations of $\mathbf{D}$.  However,  this implies that the
	equation $\boldsign_\mathbf{D} \mathbf{x} = \mathbf{1}$ has no
	solution       because        $\boldsign_\mathbf{D}(M)       +
	\boldsign_\mathbf{D}(N) = 0$.  
	This contradicts that $\mathbf{D}$ is pfaffian.
\end{proof}

Applying  Corollary~\ref{cor:vyalyi-complete-graphs},  we  obtain  the
following lower bound for the pfaffian number.
\begin{corollary}
	$\pf(K_{2n}) = \Omega(n \lg n)$ \qed
\end{corollary}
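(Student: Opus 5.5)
This corollary follows by chaining Theorem~\ref{thm:pf-and-spf} with Corollary~\ref{cor:vyalyi-complete-graphs}. No new combinatorics is needed. The work is only checking that the inequality transfers the asymptotic bound in the right direction.

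First, fix $n$ and apply Theorem~\ref{thm:pf-and-spf} to $G = K_{2n}$. This gives $\pf(K_{2n}) \geq \spf(K_{2n}) + 1 > \spf(K_{2n})$.

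Second, invoke Corollary~\ref{cor:vyalyi-complete-graphs}. It supplies a constant $c > 0$ and an $n_0$ such that $\spf(K_{2n}) \geq c\, n \lg n$ for all $n \geq n_0$. Combining the two inequalities yields $\pf(K_{2n}) \geq c\, n \lg n$ for all $n \geq n_0$, which is precisely $\pf(K_{2n}) = \Omega(n \lg n)$.

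Since $\Omega$ is a lower-bound statement and the inequality of Theorem~\ref{thm:pf-and-spf} points in the matching direction, nothing further is required. I do not expect any genuine obstacle. The only subtlety is the well-definedness of $\pf(K_{2n})$: it must be a finite integer, i.e.\ $K_{2n}$ must be $k$-pfaffian for some $k$. The paper asserts this holds for every graph, and if needed it can be taken as given. Even without it, the lower bound holds vacuously in the sense that no $k$-pfaffian orientation with $k < \spf(K_{2n}) + 1$ exists.
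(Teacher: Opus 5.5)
Your proposal is correct and is the paper's argument: the corollary comes from combining Theorem~\ref{thm:pf-and-spf}, $\pf(G) \geq \spf(G) + 1$, with Vyalyi's bound $\spf(K_{2n}) = \Omega(n \lg n)$ from Corollary~\ref{cor:vyalyi-complete-graphs}. Your side remark on the finiteness of $\pf(K_{2n})$ is settled by Theorem~\ref{thm:pf-genus}, since every graph embeds in some orientable surface.
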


Vyalyi~\cite{vyalyi21} also proved lower bounds for a family of sparse
graphs.     From    the    results    of    Vyalyi    together    with
Theorem~\ref{thm:pf-and-spf}, we conclude that  the pfaffian number is
unbounded.  In  the next section,  we show  a different proof  of that
fact  that does  not  depend  on the  lower  bounds  for the  symbolic
pfaffian method.

\section{The Pfaffian Number and Conformal Subgraphs}
\label{sec:conformal-subgraphs}

In this section, we  prove a lower bound for the  pfaffian number of a
graph that depends on the  pfaffian number of its conformal subgraphs.
This  lower bound  is a  direct  consequence of  a new  result on  the
Khatri-Rao product.  In  that sense, we start  this section describing
the relation between  the Khatri-Rao product and the  restriction of a
$k$-orientation to conformal subgraphs.

\begin{proposition}\label{prop:khatri-rao-and-signature-matrix}
	Let  $\mathbf{D}$ be  a $k$-orientation  of a  matching covered graph
	$G$.  Let $H'$ be a non-spanning conformal subgraph of $G$ and
	let $H'' := G -  V(H')$.  Let $\mathbf{D}'$ and $\mathbf{D}''$
	be  the  restrictions  of  $\mathbf{D}$  to  $H'$  and  $H''$,
	respectively.  Then, the  matrix $\boldsign_{\mathbf{D}'} \ast
	\boldsign_{\mathbf{D}''}$ is formed by a subset of the rows of
	$\boldsign_\mathbf{D}$.
\end{proposition}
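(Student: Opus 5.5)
The plan is to show that each row of $\boldsign_{\mathbf{D}'} \ast \boldsign_{\mathbf{D}''}$ is exactly the sign vector in $\mathbf{D}$ of some perfect matching of $G$. The obvious candidate is the union of a perfect matching of $H'$ and one of $H''$. The only care needed is in fixing vertex enumerations, since signs depend on them, although changing the enumeration only flips whole columns uniformly.

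First, fix the setup. Let $V(H') = \{1,\dots,2p\}$ after relabeling the vertices of $G$, so that $V(H'') = \{2p+1,\dots,2k'\}$; here $2k'$ is the order of $G$. By the remark in the introduction, relabeling the vertices of $G$ multiplies every column of $\boldsign_\mathbf{D}$ by a fixed sign in $\{\pm1\}$ per orientation. I would therefore first prove the statement for this convenient enumeration. For an arbitrary enumeration, the two sides then differ by the same column sign changes, provided we also take the enumerations of $H'$ and $H''$ as the induced ones. If the intended statement allows arbitrary enumerations of $H'$ and $H''$, I would note that the claim holds up to this column sign normalization. I expect this bookkeeping to be the only delicate point.

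Next, take perfect matchings $M'$ of $H'$ and $M''$ of $H''$. These exist and are listed as rows because $H'$ is conformal, so both $H'$ and $H''$ are matchable. Then $M := M' \cup M''$ is a perfect matching of $G$, since $V(H')$ and $V(H'')$ partition $V(G)$. For each $i$, write the permutation $\pi_{D_i}(M)$ by listing the arcs of $M'$ first and those of $M''$ afterward. This permutation maps $\{1,\dots,2p\}$ onto $V(H')$ as $\pi_{D'_i}(M')$ does, and maps the remaining positions onto $V(H'')$ as $\pi_{D''_i}(M'')$ does, shifted by $2p$. It is therefore the direct sum of the two permutations, and its sign is the product of their signs. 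So $\sign_{D_i}(M) = \sign_{D'_i}(M')\cdot\sign_{D''_i}(M'')$ for every $i$, which gives $\boldsign_\mathbf{D}(M) = \boldsign_{\mathbf{D}'}(M') \odot \boldsign_{\mathbf{D}''}(M'')$.

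Finally, the map $(M',M'') \mapsto M'\cup M''$ is injective, because $M' = M\cap E(H')$ and $M'' = M \cap E(H'')$. Hence distinct rows of the Khatri-Rao product, indexed by distinct pairs, correspond to distinct rows of $\boldsign_\mathbf{D}$. This shows that $\boldsign_{\mathbf{D}'} \ast \boldsign_{\mathbf{D}''}$ consists of a subset of the rows of $\boldsign_\mathbf{D}$, up to row order.
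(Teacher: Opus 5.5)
Your core argument is the paper's own proof. You enumerate $V(G)$ with $V(H')$ first and list the arcs of $M'$ before those of $M''$ in $\pi_{D_i}(M'\cup M'')$. You then note that this permutation is a direct sum, so its sign is $\sign_{D'_i}(M')\cdot\sign_{D''_i}(M'')$, which gives $\boldsign_{\mathbf{D}}(M'\cup M'')=\boldsign_{\mathbf{D}'}(M')\odot\boldsign_{\mathbf{D}''}(M'')$. Your injectivity remark is a harmless addition that the paper leaves implicit.

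One correction to your bookkeeping paragraph. It is false that the identity survives an arbitrary enumeration of $G$ as long as $H'$ and $H''$ carry the induced enumerations. A relabeling $\sigma$ of $V(G)$ multiplies all of $\boldsign_{\mathbf{D}}$ by the single scalar $\sign(\sigma)$, the same for every orientation. Choose your convenient enumeration to induce the same orders on $V(H')$ and $V(H'')$ as the given one. Then passing from it to the given one leaves the Khatri--Rao side unchanged, while $\sign(\sigma)$ is the sign of the shuffle interleaving $V(H')$ with $V(H'')$, which may be $-1$.

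For example, take $K_4$ with $V(H')=\{1,3\}$, $V(H'')=\{2,4\}$ and arcs $1\to3$, $2\to4$. The permutation of $\{13,24\}$ is the transposition $(2\;3)$, so its sign is $-1$, while both pieces have sign $1$ under the induced enumerations. Let $D$ be a pfaffian orientation of $K_4$ containing these two arcs (reverse the trivial cuts at $1$ or $2$ if necessary), and let $\mathbf{D}=(D)$. Then every row of $\boldsign_{\mathbf{D}}$ equals $-1$, so the Khatri--Rao row $1$ does not occur at all.

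Thus the proposition holds exactly only for an enumeration listing $V(H')$ first, which is what you and the paper actually prove. In general it holds only up to a global factor $\pm1$. This is harmless for the subsequent corollary, since $-\boldalpha$ is then a zero-free solution of the Khatri--Rao system.
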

\begin{proof}
	Let  $M'$ and  $M''$  be  perfect matchings  of  $H'$ and  $H''$,
	respectively.  
	Adjust the  enumeration of the  vertices of $G$ such  that the
	first~$2|M'|$ vertices are the vertices in $H'$.
	For every $i$, $1 \leq i \leq k$, adjust the order of the edges listed in
	the  permutation~$\pi_{D_i}(M' \cup  M'')$ such  that all  the
	edges in $M'$  are listed first.  Hence, it is  clear that the
	number of inversions  of $\pi_{D_i}(M' \cup M'')$  is equal to
	the  sum  of  the  number  of  inversions  of~$\pi_{D'_i}(M')$
	and of $\pi_{D''_i}(M'')$.      Thus,      we     obtain     that
	 \[\sign_{D'_i}(M')  \cdot  \sign_{D''_i}(M'') =  \sign_{D_i}(M'\cup  M'').\]
	Therefore,  $\boldsign_{\mathbf{D}'}(M')  \odot
	\boldsign_{\mathbf{D}''}(M'') = \boldsign_{\mathbf{D}}(M' \cup
	M'')$ holds by the definition of the Hadamard product.
	Given that $M'$ and $M''$ are arbitrary perfect matchings
	and by the  definition of the Khatri-Rao  product, we conclude
	that  every row  of the  matrix $\boldsign_{\mathbf{D'}}  \ast
	\boldsign_{\mathbf{D''}}$       is        a       row       of
	$\boldsign_{\mathbf{D}}$.
\end{proof}

Now we  will prove  an upper  bound for the  sum of  the ranks  of two
particular matrices  using the  Khatri-Rao product.   We will  use the
following straightforward result.
\begin{lemma}\label{lem:linear-independence}
	Let  $\mathcal{B}   =  \{\mathbf{v_1},   \mathbf{v_2},  \dots,
	\mathbf{v_n}\}$ be  a set  of linearly independent  vectors in
	$\mathbb{R}^k$.     Let~$\mathbf{u}$    be   a    vector    in
	$\mathbb{R}^k$ with no zero  entries.  The set $\mathcal{B}' =
	\{\mathbf{u}  \odot  \mathbf{v_i}\}_{1  \leq  i  \leq  n}$  is
	linearly independent.  \qed
\end{lemma}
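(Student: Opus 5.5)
The claim is Lemma~\ref{lem:linear-independence}: if $\mathbf{v_1},\dots,\mathbf{v_n}$ are linearly independent in $\mathbb{R}^k$ and $\mathbf{u}$ has no zero entries, then the vectors $\mathbf{u}\odot\mathbf{v_i}$ are linearly independent. The plan is to realize the Hadamard product with $\mathbf{u}$ as an invertible linear map. Let $\mathbf{U}$ be the diagonal matrix $\mathrm{diag}(u_1,\dots,u_k)$. Then $\mathbf{u}\odot\mathbf{v}=\mathbf{U}\mathbf{v}$ for every $\mathbf{v}\in\mathbb{R}^k$. Since every $u_j\neq 0$, $\mathbf{U}$ is invertible, with inverse $\mathrm{diag}(1/u_1,\dots,1/u_k)$.

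Next, I would verify independence directly from the definition. Suppose $\sum_{i=1}^n \lambda_i(\mathbf{u}\odot\mathbf{v_i})=\mathbf{0}$ for some scalars $\lambda_i$. By linearity this is $\mathbf{U}\bigl(\sum_i\lambda_i\mathbf{v_i}\bigr)=\mathbf{0}$. Multiplying on the left by $\mathbf{U}^{-1}$ gives $\sum_i\lambda_i\mathbf{v_i}=\mathbf{0}$. The linear independence of $\mathcal{B}$ then forces $\lambda_i=0$ for all $i$, so $\mathcal{B}'$ is linearly independent.

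One bookkeeping point needs care: $\mathcal{B}'$ should have exactly $n$ distinct elements. This holds because $\mathbf{U}$ is injective, so distinct $\mathbf{v_i}$ have distinct images.

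There is no real obstacle here. The only step that needs attention is noting that the hypothesis ``no zero entries'' is exactly what makes $\mathbf{U}$ invertible. Without it, the claim fails: for example, $\mathbf{u}=(1,0)$ sends the independent vectors $(1,1)$ and $(1,0)$ to the same vector. In the paper's intended application, $\mathbf{u}$ will be a sign vector with entries $\pm1$, so $\mathbf{U}$ is even its own inverse.
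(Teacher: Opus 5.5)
Your proof is correct. Writing $\mathbf{u}\odot\mathbf{v} = \mathrm{diag}(u_1,\dots,u_k)\,\mathbf{v}$ and using the invertibility of this diagonal matrix is the straightforward argument the paper intends; the paper omits the proof. One small correction to your closing aside: in the paper's application (Theorem~\ref{thm:khatri-rao-upperbound}) the vector $\mathbf{u}$ is the solution $\boldalpha$, an arbitrary real vector with no zero entries rather than a $\pm1$ sign vector, though your proof does not rely on that remark.
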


\begin{theorem}\label{thm:khatri-rao-upperbound}
	Let $\mathbf{A} \in \mathbb{R}^{m_1 \times n}$ and $\mathbf{B}
	\in  \mathbb{R}^{m_2  \times n}$  be  matrices  with the  same
	number of columns.  If there  is a vector $\boldalpha$ without
	zero  entries  that  satisfies $(\mathbf{A}  \ast  \mathbf{B})
	\boldalpha = \mathbf{1}$, then
		\[ \rank(\mathbf{A}) + \rank(\mathbf{B}) - 1 \leq n. \]
\end{theorem}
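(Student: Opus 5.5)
The plan is to read the hypothesis $(\mathbf{A} \ast \mathbf{B})\boldalpha = \mathbf{1}$ row by row and turn it into the statement that a single linear system with coefficient matrix $\mathbf{A}$ has many linearly independent solutions. Then Lemma~\ref{lem:rank-upperbound} gives the bound directly.

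First, I will fix indices $i$ and $j$ with $1 \le i \le m_1$ and $1 \le j \le m_2$. By the definition of the Khatri-Rao product, the corresponding row equation is $\sum_{\ell} (\mathbf{A}_i)_\ell (\mathbf{B}_j)_\ell \alpha_\ell = 1$. Regrouping the factors, this reads $\mathbf{A}_i \cdot (\mathbf{B}_j \odot \boldalpha) = 1$. Setting $\mathbf{w}_j := \mathbf{B}_j \odot \boldalpha$, this holds for every $i$, so $\mathbf{A}\mathbf{w}_j = \mathbf{1}$ for every $j$. Thus each vector $\mathbf{w}_j$, $1 \le j \le m_2$, is a solution of the system $\mathbf{A}\mathbf{x} = \mathbf{1}$.

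Next, I will show that the set $\{\mathbf{w}_j\}$ contains $\rank(\mathbf{B})$ linearly independent vectors. I choose $r := \rank(\mathbf{B})$ linearly independent rows $\mathbf{B}_{j_1}, \dots, \mathbf{B}_{j_r}$. Since $\boldalpha$ has no zero entries, Lemma~\ref{lem:linear-independence} (with $\mathbf{u} = \boldalpha$) implies that $\mathbf{w}_{j_1}, \dots, \mathbf{w}_{j_r}$ are linearly independent. Hence $\mathbf{A}\mathbf{x} = \mathbf{1}$ has $r$ linearly independent solutions. Lemma~\ref{lem:rank-upperbound} then yields $\rank(\mathbf{A}) \le n - \rank(\mathbf{B}) + 1$, which is the claimed inequality.

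The only point needing care is the degenerate case $\rank(\mathbf{B}) = 0$, where Lemma~\ref{lem:rank-upperbound} would be applied with $t = 0$. This case cannot occur with $m_1, m_2 \ge 1$. If $\mathbf{B}$ were zero, then $\mathbf{A} \ast \mathbf{B}$ would be zero as well, and $(\mathbf{A} \ast \mathbf{B})\boldalpha = \mathbf{0} \neq \mathbf{1}$. So $r \ge 1$ and the argument applies. Apart from this, I expect no real obstacle: the key step is just the regrouping $(\mathbf{A}_i \odot \mathbf{B}_j)\cdot\boldalpha = \mathbf{A}_i \cdot (\mathbf{B}_j \odot \boldalpha)$.
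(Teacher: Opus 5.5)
Your proposal is correct and follows the paper's proof essentially step for step. Both arguments regroup each row equation as $\mathbf{A}_i(\mathbf{B}_j^T \odot \boldalpha) = 1$, then use Lemma~\ref{lem:linear-independence} to obtain $\rank(\mathbf{B})$ linearly independent solutions of $\mathbf{A x} = \mathbf{1}$, and conclude with Lemma~\ref{lem:rank-upperbound}. Your extra discussion of the case $\rank(\mathbf{B}) = 0$ is valid but unnecessary, since in that case the claimed inequality $\rank(\mathbf{A}) - 1 \leq n$ holds trivially.
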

\begin{proof}
	Suppose that such a vector  $\boldalpha$ exists.  Then, by the
	definition   of  the   Khatri-Rao   product,   we  have   that
	$(\mathbf{A}_i \odot \mathbf{B}_j) \boldalpha = 1$, for
	every $i$, $1 \leq i \leq m_1$, and every $j$, $1 \leq j \leq m_2$.   
	Notice that, after transposing $\mathbf{B}_j$ to match dimensions, we
	can  apply  the  associativity between  the  Hadamard
	product  and  the  matrix product, yielding $\mathbf{A}_i
	(\mathbf{B}^T_j  \odot  \boldalpha)   =  1$.   Therefore,
	the vector set $\{\mathbf{B}^T_j \odot  \boldalpha\}_{1 \leq j \leq  m_2}$ is a
	set of  solutions to  the system~$\mathbf{A x}  = \mathbf{1}$.
	Since the vector $\boldalpha$ does  not contain zeroes and due
	to   Lemma~\ref{lem:linear-independence},   we   obtain   that
	$\mathbf{A x}  = \mathbf{1}$ has  at least~$\rank(\mathbf{B})$
	linearly          independent          solutions.           By
	Lemma~\ref{lem:rank-upperbound},     we      conclude     that
	$\rank(\mathbf{A}) \leq n - \rank(\mathbf{B}) + 1$.
\end{proof}

The theorem above  has direct consequences for the  pfaffian number of
graphs. 

\begin{corollary}
	Let  $G$ be  a matching covered graph.  Let  $H'$ be  a non-spanning
	conformal subgraph  of $G$ and  let $H''  := G -  V(H')$.  Let
	$\mathbf{D}$ be  a pfaffian $\pf(G)$-orientation of  $G$.  Let
	$\mathbf{D}'$  and  $\mathbf{D}''$   be  the  restrictions  of
	$\mathbf{D}$ to $H'$ and $H''$, respectively.  Then,
		\[ \rank(\boldsign_{\mathbf{D}'}) + \rank(\boldsign_\mathbf{D''}) - 1 \leq \pf(G). \]
\end{corollary}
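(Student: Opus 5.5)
The plan is to apply Theorem~\ref{thm:khatri-rao-upperbound} with $\mathbf{A} := \boldsign_{\mathbf{D}'}$, $\mathbf{B} := \boldsign_{\mathbf{D}''}$ and $n := k := \pf(G)$. Both matrices have $k$ columns, one per orientation of $\mathbf{D}$, so the dimensions match. Two things must be supplied: a vector $\boldalpha$ with no zero entries, and the identity $(\mathbf{A} \ast \mathbf{B})\boldalpha = \mathbf{1}$.

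For the identity, let $\boldalpha$ be any solution of $\mathbf{D}$, so $\boldsign_{\mathbf{D}}\boldalpha = \mathbf{1}$. By Proposition~\ref{prop:khatri-rao-and-signature-matrix}, every row of $\boldsign_{\mathbf{D}'} \ast \boldsign_{\mathbf{D}''}$ is a row of $\boldsign_{\mathbf{D}}$. Here we use that $H'$ and $H''$ are both matchable, since $H'$ is conformal, so both matrices are nonempty. It follows immediately that $(\boldsign_{\mathbf{D}'} \ast \boldsign_{\mathbf{D}''})\boldalpha = \mathbf{1}$.

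The main step, though still easy, is showing that $\boldalpha$ has no zero entries. Minimality of $k = \pf(G)$ handles this. Suppose $\alpha_i = 0$ for some $i$. Delete $D_i$ from $\mathbf{D}$ and the $i$-th entry from $\boldalpha$. The resulting $(k-1)$-orientation has signature matrix equal to $\boldsign_{\mathbf{D}}$ with column $i$ removed, and the shortened vector still solves~\eqref{eq:signature-matrix-system}. So $G$ would be $(k-1)$-pfaffian, contradicting $k = \pf(G)$. The degenerate case $k=1$ causes no trouble, because then the single entry $\alpha_1$ must be nonzero for the system to have right-hand side $\mathbf{1}$ at all. Hence every solution of a pfaffian $\pf(G)$-orientation has only nonzero entries.

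With both hypotheses checked, Theorem~\ref{thm:khatri-rao-upperbound} gives $\rank(\boldsign_{\mathbf{D}'}) + \rank(\boldsign_{\mathbf{D}''}) - 1 \leq k = \pf(G)$, which is the claim.
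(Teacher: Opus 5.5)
Your proposal is correct and follows the paper's route. The paper also takes a solution $\boldalpha$ of $\mathbf{D}$, uses Proposition~\ref{prop:khatri-rao-and-signature-matrix} to show it solves the Khatri-Rao system, rules out zero entries by the minimality of $\pf(G)$ (discarding orientations with zero coefficients), and then applies Theorem~\ref{thm:khatri-rao-upperbound}. Your extra remarks on why both signature matrices are nonempty and on the case $k=1$ are harmless refinements of that argument.
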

\begin{proof}
	Let   $\boldalpha$  be   a  solution   of  $\mathbf{D}$.    By
	Proposition~\ref{prop:khatri-rao-and-signature-matrix},     we
	know    that   $\boldalpha$    is    a    solution   to    the
	system~$(\boldsign_\mathbf{D'}   \ast  \boldsign_\mathbf{D''})
	\mathbf{x}  = \mathbf{1}$.   Notice that  $\boldalpha$ has  no
	zeroes;  otherwise,  we  could  discard  the  orientations  of
	$\mathbf{D}$ corresponding to the zero entries of $\boldalpha$
	and  obtain  a  pfaffian  $k$-orientation of  $G$  with  $k  <
	\pf(G)$, a  contradiction.  Thus, this corollary  follows from
	Theorem~\ref{thm:khatri-rao-upperbound}.
\end{proof}

The obtained lower  bound for the pfaffian number can  also be written
using the pfaffian numbers of  the subgraphs by noticing that the
rank of a signature matrix of a pfaffian $k$-orientation is lower bounded by  the pfaffian
number of the underlying graph.

\begin{proposition}
	Let $\mathbf{D}$ be a pfaffian $k$-orientation of a graph $G$.
	Then,
		\[ \pf(G) \leq \rank(\boldsign_\mathbf{D}). \]
\end{proposition}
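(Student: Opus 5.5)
Let $r := \rank(\boldsign_\mathbf{D})$ and $S := \boldsign_\mathbf{D}$, a matrix with $k$ columns. I will build a pfaffian $r$-orientation of $G$ directly from $\mathbf{D}$ by selecting a subset of its orientations. The idea is that the columns of $S$ span a space of dimension $r$, so some set of $r$ columns forms a basis of the column space. Restricting $\mathbf{D}$ to the corresponding orientations should preserve solvability of the system~\eqref{eq:signature-matrix-system}.

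First, fix a solution $\boldalpha$ of $\mathbf{D}$, so that $S\boldalpha = \mathbf{1}$. Thus $\mathbf{1}$ lies in the column space of $S$. Next, choose indices $i_1 < \dots < i_r$ such that the columns $S^{(i_1)}, \dots, S^{(i_r)}$ form a basis of that column space. Then $\mathbf{1} = \sum_{j=1}^r \beta_j S^{(i_j)}$ for some reals $\beta_j$.

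Let $\mathbf{D}^* := (D_{i_1}, \dots, D_{i_r})$, an $r$-orientation of $G$. Its signature matrix is exactly the submatrix of $S$ formed by these columns, with the rows indexed by the same perfect matchings. This holds because $\sign_{D_{i_j}}(M)$ is the $(M,i_j)$ entry of $S$. Hence $\boldsign_{\mathbf{D}^*}\boldsymbol{\beta} = \mathbf{1}$, so $\mathbf{D}^*$ is a pfaffian $r$-orientation. Therefore $G$ is $r$-pfaffian and $\pf(G) \le r$.

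The only point requiring care is the degenerate case: we need $r \ge 1$ for an $r$-orientation to make sense. This holds because $S$ has entries $\pm 1$ and at least one row whenever $G$ is matchable. If $G$ has no perfect matchings, the statement is vacuous or follows by convention. No step here seems a genuine obstacle; the argument is elementary linear algebra, namely column selection from a spanning set.
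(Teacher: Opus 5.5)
Your proof is correct and follows the same route as the paper: select orientations corresponding to a basis of the column space of $\boldsign_\mathbf{D}$, observe that $\mathbf{1}$ remains in their span, and conclude that this sub-orientation is a pfaffian $\rank(\boldsign_\mathbf{D})$-orientation. One small correction to your degenerate-case remark: if $G$ has no perfect matching, the statement is not vacuous but actually fails, since the rank is $0$ while $\pf(G)\ge 1$. So, as the paper does implicitly, you need to assume $G$ is matchable.
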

\begin{proof}
	Let $\mathbf{A} :=  \boldsign_\mathbf{D}$.  Since $\mathbf{D}$
	is pfaffian,  the vector $\mathbf{1}$ is  a linear combination
	of  the  columns  of  $\mathbf{A}$.  Let  $\mathbf{D}'$  be  a
	$\rank(\mathbf{A})$-orientation formed  by $\rank(\mathbf{A})$
	orientations   of  $\mathbf{D}$   corresponding  to   linearly
	independent columns  of $\mathbf{A}$.   Clearly, $\mathbf{D}'$
	is  pfaffian.  Therefore,  $\rank(\mathbf{A})$  cannot be  less
	than $\pf(G)$.
\end{proof}
\begin{corollary}\label{cor:pf-conformal-lower-bound}
	Let  $G$ be  a matching covered graph.  Let  $H'$ be  a non-spanning
	conformal subgraph of $G$ and let $H'' := G - V(H')$.  Then,
		\[ \pf(H') + \pf(H'') - 1 \leq \pf(G). \tag*{\qed} \]
\end{corollary}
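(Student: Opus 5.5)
The plan is to combine the preceding corollary with the proposition $\pf(G) \leq \rank(\boldsign_\mathbf{D})$, applied to the restrictions. Let $k := \pf(G)$ and fix a pfaffian $k$-orientation $\mathbf{D}$ of $G$. Let $\mathbf{D}'$ and $\mathbf{D}''$ be its restrictions to $H'$ and $H''$. The previous corollary (via Theorem~\ref{thm:khatri-rao-upperbound}) gives
\[ \rank(\boldsign_{\mathbf{D}'}) + \rank(\boldsign_{\mathbf{D}''}) - 1 \leq k. \]
It therefore suffices to show that $\mathbf{D}'$ and $\mathbf{D}''$ are themselves pfaffian. The proposition then yields $\pf(H') \leq \rank(\boldsign_{\mathbf{D}'})$ and $\pf(H'') \leq \rank(\boldsign_{\mathbf{D}''})$, and substituting gives the claim.

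To see that $\mathbf{D}'$ is pfaffian, I will use Proposition~\ref{prop:khatri-rao-and-signature-matrix} together with the solution $\boldalpha$ of $\mathbf{D}$. Since $H''$ is matchable (as $H'$ is conformal), fix a perfect matching $M''$ of $H''$ and set $\mathbf{s} := \boldsign_{\mathbf{D}''}(M'')$, a $\pm 1$ vector. For every perfect matching $M'$ of $H'$, the row $\boldsign_{\mathbf{D}'}(M') \odot \mathbf{s}$ is a row of $\boldsign_\mathbf{D}$, so
\[ \boldsign_{\mathbf{D}'}(M') \, (\mathbf{s} \odot \boldalpha) = 1. \]
Hence $\mathbf{s} \odot \boldalpha$ solves $\boldsign_{\mathbf{D}'} \mathbf{x} = \mathbf{1}$, and $\mathbf{D}'$ is a pfaffian $k$-orientation of $H'$. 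The symmetric argument, fixing a perfect matching of $H'$, shows that $\mathbf{D}''$ is pfaffian.

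Two points need a little care. First, the proposition $\pf(G)\le\rank(\boldsign_\mathbf{D})$ was stated for an arbitrary graph with a pfaffian $k$-orientation, so it applies to $H'$ and $H''$ even though they need not be matching covered. Second, the earlier corollary requires $G$ matching covered and $H'$ non-spanning conformal, and these are exactly the hypotheses here. Its proof also used that $\boldalpha$ has no zero entries, which holds because $k=\pf(G)$ is minimal.

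I expect no serious obstacle. The only real content is the observation that fixing a matching on one side turns the Khatri-Rao rows into a signed copy of the other signature matrix, and the remainder is bookkeeping.
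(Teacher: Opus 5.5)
Your proposal is correct and takes the paper's intended route: combine the rank inequality of the preceding corollary with the proposition $\pf(G) \leq \rank(\boldsign_\mathbf{D})$, applied to the restrictions $\mathbf{D}'$ and $\mathbf{D}''$. The paper's \qed leaves implicit that these restrictions are pfaffian, and your argument via Proposition~\ref{prop:khatri-rao-and-signature-matrix} fills that step correctly, using $\boldsign_{\mathbf{D}''}(M'') \odot \boldalpha$ as an explicit solution for $\mathbf{D}'$.
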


To apply this lower bound,  we need a non-spanning subgraph.  However,
the restriction of a pfaffian $k$-orientation of a graph $G$ to a spanning
subgraph of $G$ clearly remains pfaffian.  
This observation allows us to extend the lower bound to arbitrary conformal subgraphs, yielding the following corollary.
\begin{corollary}
	Let $G$ be a matching covered graph.   If $H$ is a conformal subgraph
	of $G$, then the inequality $\pf(G) \geq \pf(H)$ holds.  \qed
\end{corollary}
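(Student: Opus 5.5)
The plan is to split according to whether $H$ is spanning, applying Corollary~\ref{cor:pf-conformal-lower-bound} in the non-spanning case and a direct restriction argument in the spanning case.

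\emph{Non-spanning case.} Suppose $H$ is non-spanning, and set $H'' := G - V(H)$. Since $H$ is conformal, $H''$ is matchable, so it has at least one perfect matching. Any pfaffian orientation of a matchable graph trivially has every perfect-matching sign equal, so $\pf(H'') \geq 1$ (a graph with a perfect matching cannot be $0$-pfaffian, because the system $\boldsign \mathbf{x} = \mathbf{1}$ with zero columns has no solution). Corollary~\ref{cor:pf-conformal-lower-bound} then gives
\[\pf(G) \geq \pf(H) + \pf(H'') - 1 \geq \pf(H).\]
One subtlety is that $H$ itself need not be matching covered; $\pf(H)$ is nonetheless well defined for matchable graphs. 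I will check that the proofs of Proposition~\ref{prop:khatri-rao-and-signature-matrix} and Theorem~\ref{thm:khatri-rao-upperbound} use only that $H$ and $H''$ are matchable, which they appear to do.

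\emph{Spanning case.} Suppose $H$ is spanning. Let $\mathbf{D}$ be a pfaffian $\pf(G)$-orientation of $G$ with solution $\boldalpha$, and let $\mathbf{D}_H$ be its restriction to $H$. Because $H$ is spanning, every perfect matching of $H$ is a perfect matching of $G$, with the same vertex enumeration and the same arc orientations. Hence its sign vector in $\mathbf{D}_H$ equals its sign vector in $\mathbf{D}$, so $\boldsign_{\mathbf{D}_H}$ is a submatrix formed by rows of $\boldsign_\mathbf{D}$. It follows that $\boldalpha$ solves $\boldsign_{\mathbf{D}_H}\mathbf{x} = \mathbf{1}$, so $H$ is $\pf(G)$-pfaffian and $\pf(H) \leq \pf(G)$.

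The main obstacle, such as it is, lies in the non-spanning case: confirming that Corollary~\ref{cor:pf-conformal-lower-bound} applies when $H$ and $H''$ are merely matchable rather than matching covered, and that $\pf(H'') \geq 1$. Both are immediate from the definitions.
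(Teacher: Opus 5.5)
Your proof is correct and is essentially the paper's argument: the non-spanning case is Corollary~\ref{cor:pf-conformal-lower-bound} combined with $\pf(G - V(H)) \geq 1$, and the spanning case is the observation that restricting a pfaffian $k$-orientation of $G$ to a spanning subgraph keeps it pfaffian, because the signature matrix of the restriction consists of rows of $\boldsign_\mathbf{D}$. Your worry about $H$ and $H''$ being only matchable is moot, since Corollary~\ref{cor:pf-conformal-lower-bound} only assumes that $G$ is matching covered.
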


The  lower  bound in  Corollary~\ref{cor:pf-conformal-lower-bound}  is
sharp.  Indeed, consider the graph  $K_{4,4}$.  Let $H$ be a conformal
subgraph of $K_{4, 4}$ isomorphic  to $K_{3,3}$.  The graph $K_{4,4} -
V(H)$ is formed by a single edge and, hence, is pfaffian.  Then, given
that         $\pf(K_{3,3})         =         4$         and         by
Corollary~\ref{cor:pf-conformal-lower-bound},    we     obtain    that
$\pf(K_{4,4}) \geq 4$.  Moreover, since $K_{4,4}$ is embeddable in the
torus   and   by    Theorem~\ref{thm:pf-genus},   we   conclude   that
$\pf(K_{4,4}) = 4$ This example is  a particular case of the following
corollary.
\begin{corollary}
	Let  $G$ be  a matching covered graph.   Let $H$  be a  non-spanning
	conformal subgraph  of $G$.   If $\pf(G)  = \pf(H)$,  then the
	graph $G - V(H)$ is pfaffian.  \qed
\end{corollary}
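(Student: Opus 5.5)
This follows from Corollary~\ref{cor:pf-conformal-lower-bound} together with the fact that every graph $H''$ has $\pf(H'') \geq 1$, with equality exactly when $H''$ is pfaffian.

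Set $H'' := G - V(H)$. Since $H$ is conformal, $H''$ is matchable, so $\mathcal{M}(H'')$ is nonempty and $\pf(H'')$ is a well-defined positive integer. Since $H$ is non-spanning, Corollary~\ref{cor:pf-conformal-lower-bound} applies with $H' := H$ and gives
\[ \pf(H) + \pf(H'') - 1 \leq \pf(G). \]
Substituting the hypothesis $\pf(G) = \pf(H)$ yields $\pf(H'') \leq 1$. Since the pfaffian number is always at least $1$, we get $\pf(H'') = 1$.

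It remains to check that $\pf(H'') = 1$ means $H''$ is pfaffian. A pfaffian $1$-orientation $(D)$ of $H''$ with solution $\alpha$ satisfies $\alpha \cdot \sign_D(M) = 1$ for every perfect matching $M$ of $H''$. Hence all perfect matchings of $H''$ have the same sign, namely the sign of $\alpha$, so $D$ is a pfaffian orientation.

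The only point that needs care is whether Corollary~\ref{cor:pf-conformal-lower-bound} applies as stated. That corollary assumes $G$ matching covered and $H'$ non-spanning and conformal, and both assumptions are given here. It makes no separate demand that $H''$ be matching covered. If it were read as needing that, I would go back to Proposition~\ref{prop:khatri-rao-and-signature-matrix} and Theorem~\ref{thm:khatri-rao-upperbound}, neither of which uses matching coverage of the pieces. They give $\rank(\boldsign_{\mathbf{D}'}) + \rank(\boldsign_{\mathbf{D}''}) - 1 \leq \pf(G)$. Combining this with $\pf(\cdot) \leq \rank(\boldsign)$ for pfaffian orientations of $H$ and $H''$ then gives the same conclusion.
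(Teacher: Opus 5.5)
Your proof is correct and is the argument the paper intends; the paper states this corollary with \qed as an immediate consequence of Corollary~\ref{cor:pf-conformal-lower-bound}. Taking $H' := H$, the hypothesis $\pf(G) = \pf(H)$ forces $\pf(G - V(H)) \leq 1$, and a pfaffian $1$-orientation is simply a pfaffian orientation.

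Your fallback remark is also sound, with one small addition: the bound $\pf(\cdot) \leq \rank(\boldsign)$ needs the restrictions $\mathbf{D}'$ and $\mathbf{D}''$ to be pfaffian themselves. They are, since any row of $\boldsign_{\mathbf{D}''}$, transposed and multiplied entrywise by $\boldalpha$, solves the system for $\mathbf{D}'$, and symmetrically for $\mathbf{D}''$.
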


We      do not      know      if      the      lower      bound      in
Corollary~\ref{cor:pf-conformal-lower-bound} remains sharp  if we only
consider non-pfaffian conformal subgraphs.

\subsection{Unboundedness of the Pfaffian Number}

The  lower   bound  from  Corollary~\ref{cor:pf-conformal-lower-bound}
implies that the  pfaffian number of graphs is  unbounded.  Indeed, we
only  need  to  consider  a  graph  with  many  disjoint  non-pfaffian
conformal  subgraphs.  For  example,  we can  consider  the family  of
bipartite  complete  graphs $K_{3n,  3n}$.   A  graph in  this  family
contains   $n$  disjoint   copies  of   $K_{3,3}$.   Hence,   applying
Corollary~\ref{cor:pf-conformal-lower-bound}  recursively,  we  obtain
that $\pf(K_{3n, 3n})  \geq n \cdot \pf(K_{3,3}) - (n  - 1) = 3n  + 1$.  This
graph  has $9n^2$  edges and,  therefore, the  pfaffian number  of the
bipartite complete graphs grows at least as fast as the square root of
their number of edges.

We also  constructed an  infinite family  of sparse  matchings covered
graphs in which  the pfaffian number grows at least  linearly with the
number of  edges.  Then,  we noticed that  Vyalyi~\cite{vyalyi21} also
constructed a  similar family to  prove lower bounds for  the symbolic
pfaffian method in sparse graphs.  We will focus on the Vyalyi family
of graphs because these graphs are  also cubic.  We will show that
the pfaffian numbers of the graphs  in this family grow linearly using
Corollary~\ref{cor:pf-conformal-lower-bound}.

\paragraph*{The Vyalyi Family of Graphs}

An \emph{ear} in a graph is a path of odd length all of whose internal
vertices have degree two.  A \emph{bisubdivision}  of an edge $e = uv$
of a graph $G$ is a graph obtained by replacing $e$ by an ear from $u$
to $v$.  A \emph{bisubdivision} of a  graph $G$ is a graph obtained by
the bisubdivision  of some of its  edges.  Let $H$ be  a bisubdivision
of~$K_{3,3}$ in which  exactly two non-adjacent edges  are replaced by
ears of length three.  The graphs in the Vyalyi family contain several
copies  of $H$.   Denote by  $\mathbb{V}_n$ the  member of  the Vyalyi
family that contains exactly $n$ copies  of $H$, denoted by $H_1, H_2,
\dots, H_n$.  Denote  by~$u_i$ and~$v_i$ the interior  vertices of one
of the ears  of length three in $H_i$; similarly,  denote by $p_i$ and
$q_i$ the interior vertices of the other ear of length three in~$H_i$.
The graph $\mathbb{V}_n$  is formed by the graphs $H_i$  and the edges
$u_ip_{i + 1}$ and  $v_iq_{i + 1}$, for every~$i$, $1  \leq i \leq n - 1$,
and the  edges $u_np_1$ and $v_nq_1$.   See Figure~\ref{fig:vyalyi-2}.
Notice that  the graphs  in this family  are bipartite,  cubic and
matching covered.
\begin{figure}[tb]
	\centering \includegraphics[width=5cm]{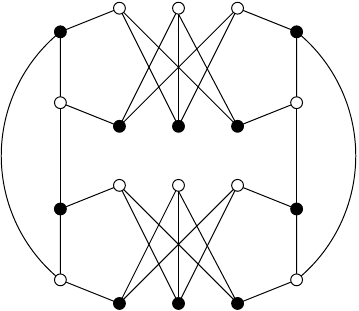}
	\caption{ The graph $\mathbb{V}_2$.
		\label{fig:vyalyi-2}
	}
\end{figure}

To  apply Corollary~\ref{cor:pf-conformal-lower-bound}  to the  Vyalyi
family,  we will  use the  following fact:  the pfaffian  number of  a
bisubdivision of  a graph $G$ equals  the pfaffian number of  $G$ (see
Proposition~\ref{prop:pf-retract} in the next section).  Then, the pfaffian
number of  each conformal subgraph  $H_i$ of $\mathbb{V}_n$  is four.
By applying  Corollary~\ref{cor:pf-conformal-lower-bound} recursively,
we   conclude  that   $\pf(\mathbb{V}_n)   \geq  3n   +  1$.    Using
Theorem~\ref{thm:vyalyi-lower-bound},   Vyalyi~\cite{vyalyi21}  proved
that $\spf(\mathbb{V}_n) \geq n \log_2 \sqrt{3/2}$.  With that result
and Theorem~\ref{thm:pf-and-spf},  we obtain  that $\pf(\mathbb{V}_n)
\geq n  \log_2 \sqrt{3/2}  + 1$.   The lower  bound proved  here using
Corollary~\ref{cor:pf-conformal-lower-bound} is slightly better.

\section{The Pfaffian Number and Separating Cuts}
\label{sec:separating-cuts}

In  1991,  Little  and  Rendl~\cite{lire91}  reduced  the  problem  of
deciding if a graph is pfaffian to deciding if the graphs in its tight
cut  decomposition  are  pfaffian.   Their result  can  be  stated  as
follows.

\begin{theorem}[Little and Rendl,~\cite{lire91}]\label{thm:pfaffian-tight-cut}
	Let $G$ be a matching covered graph and let $C$ be a tight cut
	of  $G$.  The  graph  $G$  is pfaffian  if  and  only if  both
	$C$-contractions of $G$ are pfaffian.  \qed
\end{theorem}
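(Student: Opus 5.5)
Both directions go through a cut orientation correspondence. Let $C := \partial(X)$ be the tight cut, $G_1 := G/\overline{X}$ with contraction vertex $\overline{x}$, and $G_2 := G/X$ with contraction vertex $x$. Tightness means every perfect matching $M$ of $G$ meets $C$ in exactly one edge $e$. Hence $M$ splits as $M_1 := (M \cap E(G[X])) + e$, a perfect matching of $G_1$, and $M_2 := (M \cap E(G[\overline{X}])) + e$, a perfect matching of $G_2$. Conversely, any pair $(M_1, M_2)$ of perfect matchings of $G_1$ and $G_2$ using the same edge of $C$ glues to a perfect matching of $G$. The whole proof is to compare signs through this bijection, using conformal cycles via Proposition~\ref{prop:sign-product}.

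For the ``if'' direction, take pfaffian orientations $D_1$ of $G_1$ and $D_2$ of $G_2$. First adjust them, using Corollary~\ref{cor:similar-k-orientations} with $k=1$ (reversing the trivial cut at a vertex), so that every edge of $C$ points from $X$ to $\overline{X}$ in both. This is done by reversing $\partial(\overline{x})$ in $D_1$ or $\partial(x)$ in $D_2$ edge by edge, restricted to edges of $C$. I expect that step to fail as stated, since a trivial cut reverses all of $C$ at once, not selected edges. So instead I will argue with cycles.

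Define $D$ on $G$ by taking $D_1$ on $E(G[X])$, $D_2$ on $E(G[\overline{X}])$, and $D_1$ on $C$, having first reversed $\partial(x)$ in $D_2$ if necessary so that $D_1$ and $D_2$ agree on one fixed edge $f \in C$. The key fact needed is that, because $D_1$ and $D_2$ are pfaffian, every conformal cycle of $G_i$ has sign $+1$ in $D_i$ (the Kasteleyn/oddness characterization, via Proposition~\ref{prop:sign-product}). Let $M,N$ be perfect matchings of $G$. Each $(M,N)$-alternating cycle $Q$ either avoids $C$, and then lies inside one shore and is conformal in the corresponding $G_i$, or crosses $C$ exactly twice. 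In the crossing case $Q$ is the union of an $X$-path and an $\overline{X}$-path, which project to conformal cycles $Q_1$ of $G_1$ and $Q_2$ of $G_2$. Counting forward arcs gives $|\fw_D(Q)| = |\fw_{D_1}(Q_1)| + |\fw_{D_2}(Q_2)| - (\text{contribution of the two } C\text{-edges})$ with suitable parity bookkeeping. This yields $\sign_D(Q) = \pm\sign_{D_1}(Q_1)\sign_{D_2}(Q_2)$, where the correction sign depends on whether $D_1$ and $D_2$ agree on the two crossing edges.

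The main obstacle is controlling that correction for arbitrary pairs of $C$-edges. For that I will first normalize $D_2$. Using that $G_2$ is matching covered and that conformal cycles through $x$ have sign $+1$ in every pfaffian orientation, I will show that the orientations of the edges of $\partial(x)$ in any pfaffian orientation are determined up to reversing all of them, modulo changes that do not affect signs of cycles through $x$. Concretely, I will replace $D_2$ by the orientation that agrees with $D_1$ on $C$ and verify directly that it is still pfaffian. The verification is that a cycle through $x$ using edges $e,e'$ changes sign iff exactly one of $e,e'$ was flipped, so I need the flip set to be all or none of $C$. I would establish this by comparing, for each edge pair, a cycle in $G_1$ and a cycle in $G_2$ through the same pair $e,e'$, and using that $G$ is matching covered so that such cycles can be chosen to glue into a conformal cycle of $G$.

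For the ``only if'' direction, given a pfaffian orientation $D$ of $G$, let $D_1$ be the orientation it induces on $G_1$ (contracting $\overline{X}$). For $Q_1$ a conformal cycle of $G_1$ through $\overline{x}$, I will extend it through $G[\overline{X}]$ by an $M$-alternating path to a conformal cycle $Q$ of $G$. This extension exists because $C$ is tight and $G$ is matching covered. Then $\sign_{D_1}(Q_1)$ is determined by $\sign_D(Q)$ up to a sign that is fixed once the extension is chosen consistently. This gives that all perfect matchings of $G_1$ have equal sign in $D_1$; cycles avoiding $\overline{x}$ are already cycles of $G$. The symmetric argument handles $G_2$.
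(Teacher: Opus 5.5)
Both directions have a genuine gap, and it is the same missing step: a re-orientation by a cut. The paper only cites this theorem, but its ``only if'' half is the case $k=1$ of Theorem~\ref{thm:clm-generalization}, whose proof contains exactly that step.

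\emph{The ``if'' direction.} You have misidentified the correction term. Write $Q = P_X + e + P_{\overline{X}} + e'$ and take $D = D_1$ on $E(G[X]) \cup C$. Then $|\fw_D(Q)| = |\fw_{D_1}(Q_1)| + |\fw_{D_2}(Q_2)| - b - b'$, where $b,b'\in\{0,1\}$ record whether $e,e'$ are forward on $Q_2$ in $D_2$. Since $Q$ crosses $C$ once in each direction, $b+b'=1$ exactly when $D_2$ orients $e$ and $e'$ the same way across $C$.

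Agreement between $D_1$ and $D_2$ is therefore irrelevant. Even a pfaffian $D_2$ agreeing with $D_1$ on $C$ would fail unless $D_1$ is itself coherent on $C$. Moreover, the property you plan to prove, that the flip set on $C$ is all or nothing, is false. Take $w\in\overline{X}$ incident with some but not all edges of $C$. Reversing $\partial(w)$ in $G_2$ keeps $D_2$ pfaffian and flips exactly the $C$-edges at $w$. Gluing cycles into conformal cycles of $G$ cannot rescue this, since no orientation of $G$ is known yet.

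The fix is the similarity you dismissed, applied at the vertices of $\overline{X}$ rather than at $x$. The $C$-edges at a fixed $w\in\overline{X}$ are parallel in the matching covered graph $G_2$. In any pfaffian orientation, parallel matchable edges $f,f'$ are oriented alike, since otherwise $N$ and $N-f+f'$ have opposite signs. So reversing $\partial(w)$ for suitable $w$ makes $D_2$ direct all of $C$ from $x$ into $\overline{X}$. Then $b+b'=1$, $|\fw_D(Q)|$ is odd, and $D$ is pfaffian for every pfaffian $D_1$.

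\emph{The ``only if'' direction.} Your route matches the paper's: close an $M_1$-alternating cycle $Q_1$ of $G_1$ through $\overline{x}$ by an even $M$-alternating path $P(w'')$ in $\overline{X}$. However, the claim that the induced orientation $D/\overline{X}$ itself is pfaffian is false. One gets $\sign_{D_1}(Q_1) = \sign_D(Q)\cdot(-1)^{|\fw_D(P(w''))|}$. This parity depends on the endpoint $w''$, and choosing the paths ``consistently'' does not control it.

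For instance, two $C$-edges $uw_1,uw_2$ with $u\in X$ become parallel in $G_1$. Of the two pfaffian orientations $D$ and $D \rev \partial(w_2)$ of $G$, exactly one orients this pair oppositely at $u$, so its contraction is not pfaffian. This already happens for $K_4$ with one edge bisubdivided, using the tight cut around the degree-two vertex.

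The missing step is the paper's: replace $D$ by the similar orientation $D \rev \partial(S)$, where $S$ is the set of ends $w$ of $C$ for which $P(w)$ has an odd number of forward arcs. Afterwards every $P(w)$ is evenly oriented, and $\sign_{D_1}(Q_1)=\sign_D(Q)=1$ for every $M_1$-alternating cycle. Hence the contraction of this similar orientation is pfaffian, which is all the theorem needs.
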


The sufficient condition of the theorem above does not hold if the cut is separating but not tight.
As an example, consider a cut $C$ of the Petersen graph whose shores are the $5$-cycles.
The cut $C$ is separating and both $C$-contractions are pfaffian because they are planar.
However, the Petersen graph is not pfaffian.
In contrast, the necessary condition of Theorem~\ref{thm:pfaffian-tight-cut} still holds for non-tight separating cuts.

\begin{theorem}[Little and Rendl,~\cite{lire91}]
	Let  $G$  be  a  matching  covered graph  and  let  $C$  be  a
	separating cut of $G$.  If  one of the $C$-contractions of $G$
	is not pfaffian, then $G$ is not pfaffian.  \qed
\end{theorem}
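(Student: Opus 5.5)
We prove the contrapositive: if $G$ is pfaffian, then both $C$-contractions $G_1 := G/X$ and $G_2 := G/\overline{X}$ are pfaffian, where $C := \partial(X)$. Fix a pfaffian orientation $D$ of $G$. By Corollary~\ref{cor:similar-k-orientations} (with $k=1$) we may replace $D$ by similar orientations without losing pfaffianity. The plan is to build from $D$ an orientation $D_1$ of $G_1$ in which all perfect matchings have the same sign. By symmetry the same argument then applies to $G_2$.

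Every perfect matching of $G_1$ has exactly one edge of $C$. To define $D_1$, orient each edge of $G_1$ as in $D$, identifying edges of $G_1$ with edges of $G$. Since $C$ is separating, Theorem~\ref{thm:separating-cut-characterization} shows that each edge $e\in C$ lies in a perfect matching $M_e$ of $G$ with $M_e\cap C=\{e\}$. Let $N_e$ be the restriction of $M_e$ to $\overline{X}$ together with $e$; this is a perfect matching of $G_2$.

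Now take a perfect matching $M_1$ of $G_1$ with $M_1\cap C=\{e\}$. Then $M:=M_1\cup(M_e-C)\ldots$, that is, $M_1$ glued with the $\overline{X}$-part of $M_e$, is a perfect matching of $G$ with $|M\cap C|=1$. Compare two perfect matchings $M_1,M_1'$ of $G_1$, using edges $e,e'$ of $C$. The obvious first attempt is to use Proposition~\ref{prop:sign-product}. Alternating cycles of $M_1\symdiff M_1'$ avoiding the contracted vertex $\overline{x}$ are cycles of $G$, and their signs in $D_1$ and $D$ agree. The one cycle through $\overline{x}$ (present when $e\ne e'$) corresponds in $G$ to a cycle closed up through $\overline{X}$ via the $(M_e,M_{e'})$-alternating path inside $\overline{X}$.

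\textbf{Main obstacle.} This path's forward-arc parity need not be controlled, so the sign of the cycle through $\overline{x}$ is not yet pinned down. The fix I would try is to define the fixed $\overline{X}$-part more cleverly. Choose one perfect matching $P$ of $G[\overline{X}]-w$ depending on the endpoint $w$ of $e$ in $\overline{X}$. Then adjust the orientation of $D_1$ on $C$: reverse the edge $e$ in $D_1$ exactly when $\sign_D(M_1\cup P_e)$ differs from a reference value. This is consistent because the sign of $M_1\cup P_e$ factors as the sign of $M_1$ in $D_1$ (with appropriate vertex enumeration placing $\overline{X}$ last, as in the proof of Proposition~\ref{prop:khatri-rao-and-signature-matrix}) times a quantity depending only on $e$.

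Concretely, I would show that $\sign_D(M_1\cup P_e)=\varepsilon_e\cdot\sign_{D_1}(M_1)$, with $\varepsilon_e$ depending only on $e$ and $P_e$. This is an inversion count, in which $e$ plays the role of the edge at $\overline{x}$. Since $D$ is pfaffian, the left side is constant. So after reversing exactly those $e\in C$ with $\varepsilon_e=-1$, all perfect matchings of $G_1$ have the same sign. Reversing single edges of $C$ is legitimate here because each perfect matching of $G_1$ contains exactly one edge of $C$.

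The key technical check is the factorization with $\varepsilon_e$. Its hardest case is the permutation bookkeeping when the $X$-endpoint and the $\overline{X}$-endpoint of $e$ are separated in the enumeration, and I expect that to be the only real computation.
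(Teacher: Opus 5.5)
Your proof is correct, but it takes a different route from the paper. The paper does not prove this statement itself; it quotes it from Little and Rendl. Its own argument is the proof of Theorem~\ref{thm:clm-generalization}, which contains the statement as the case $\pf(G)=1$ and works with alternating cycles. It fixes $M$ with $M\cap C=\{e\}$ and builds even $M''$-alternating paths $P(w'')$ inside $\overline{X}$ from $v$ to each attachment vertex. It then reverses a cut $\partial(S_i)$ so that all these paths have an even number of forward arcs, and lifts each $M'$-alternating cycle $Q'$ to an $M$-alternating cycle $\varphi(Q')$ of the same sign.

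You avoid cycles entirely. You use one fixed completion $P_w$ of $G[\overline{X}]-w$ per attachment vertex, which exists exactly because $C$ is separating, together with an inversion count. The factorization you left open does hold, and the case you worry about is harmless:
\begin{itemize}
\item Enumerate $X$ first, then $\overline{X}$, and give $\overline{x}$ the last label in $G_1$.
\item List the edges of $M_1$ before those of $P_w$. The first $|X|+1$ entries of $\pi_D(M_1\cup P_w)$ are then order-isomorphic to $\pi_{D_1}(M_1)$, since $w$, like $\overline{x}$, exceeds every label of $X$.
\item The cross inversions are the labels of $\overline{X}-w$ below $w$, and the remaining inversions are those of $P_w$.
\end{itemize}
So $\varepsilon_w$ depends only on $w$ and $P_w$.

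In effect both proofs flip the $C$-edges at a set of bad attachment vertices. Your flip is the restriction to $G_1$ of $D\rev\partial(S)$ with $S=\{w:\varepsilon_w=-1\}$. The paper detects bad vertices by the forward-arc parity of alternating paths; you detect them by comparison with a fixed completion.

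What each approach buys:
\begin{itemize}
\item Your version is shorter and extends verbatim to $k$-orientations. Flipping coordinatewise gives $\boldsign_{\mathbf{D}'_1}(M_1)=\boldsign_{\mathbf{D}}(M_1\cup P_w)$, a row of $\boldsign_{\mathbf{D}}$, so $\boldalpha$ remains a solution. This yields Corollary~\ref{cor:separating-cut-lower-bound} directly.
\item The paper's version produces a single orientation similar to $\mathbf{D^{(0)}}$ that works for both contractions at once. You would get that only after checking that the two sides' flips combine into $\partial(S\cup S')$, up to cuts of each contraction.
\end{itemize}

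Please tidy the notation:
\begin{itemize}
\item You declare $G_1:=G/X$ but then contract $\overline{X}$ to $\overline{x}$.
\item $N_e$ is a perfect matching of $G/X$, not of $G_2$.
\item The lift is $M_1\cup P_w$, not $M_1\cup(M_e-C)$.
\item The reversal rule is $\varepsilon_e=-1$. The quantity $\sign_D(M_1\cup P_e)$ is constant, so it cannot serve as the criterion.
\end{itemize}
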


In 2012, Carvalho, Lucchesi and Murty~\cite{clm12} proved a stronger version of the theorem above.
\begin{theorem}[Carvalho, Lucchesi and Murty,~\cite{clm12}]
	Let $C$ be a separating cut of a matching covered graph $G$ and let $D_0$ be a pfaffian orientation of $G$.
	Then $G$ has an orientation $D$ similar to $D_0$ such that both $C$-contractions of $D$ are pfaffian.
	\qed
\end{theorem}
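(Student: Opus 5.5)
The plan is to turn pfaffianness of $D_0$ into a rigid relation between the signs of perfect matchings in the two $C$-contractions, and then remove the remaining freedom by reversing cuts. Write $C = \partial(X)$, $G_1 := G/\overline{X}$ (with contraction vertex $\bar{x}$) and $G_2 := G/X$ (with contraction vertex $x$). Orient each $G_j$ by the restriction of the current orientation, where an arc of $C$ keeps its direction relative to the contraction vertex. Enumerate the vertices of $G$ so that those of $X$ come first. Then enumerate $G_1$ as $X$ followed by $\bar{x}$, and $G_2$ as $x$ followed by $\overline{X}$.

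\textbf{Step 1: every pair of contraction matchings extends.} Since $C$ is separating, both $G_1$ and $G_2$ are matching covered. Let $M_1 \in \mathcal{M}(G_1)$ and $M_2 \in \mathcal{M}(G_2)$ contain the same edge $e = uw$ of $C$, with $u \in X$ and $w \in \overline{X}$. Then $M := M_1 \cup M_2$ is a perfect matching of $G$ with $M \cap C = \{e\}$. This is the role of Theorem~\ref{thm:separating-cut-characterization}: every edge of $C$ lies in such pairs.

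\textbf{Step 2: the sign identity.} Repeat the inversion count of Proposition~\ref{prop:khatri-rao-and-signature-matrix}, listing the edges of $M_1 - e$ first, then $e$, then those of $M_2 - e$. I expect
\[ \sign_{D}(M) = \varepsilon(e)\cdot \sign_{D_1}(M_1)\cdot\sign_{D_2}(M_2), \]
where $\varepsilon(e) \in \{\pm 1\}$ depends only on the positions of $u$ and $w$ in the enumerations. In particular $\varepsilon(e)$ has the product form $\varepsilon(e) = \beta(u)\gamma(w)$. Because $D_0$ is pfaffian, the left side is a constant $s$. Fixing $M_2$ and varying $M_1$ shows that $\sign_{D_1}(M_1)$ depends only on the edge $M_1 \cap C$; call this value $\sigma_1(e)$. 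Symmetrically define $\sigma_2(e)$. These satisfy $\sigma_1(e)\sigma_2(e)\varepsilon(e) = s$ for all $e \in C$.

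\textbf{Step 3: cut reversals act by vertex functions.} Reversing $\partial(\{w\})$ in $G$ for $w \in \overline{X}$ produces a similar orientation. In $G_2$ this is a trivial cut not at $x$, so it multiplies every sign in $G_2$ by $-1$, which is harmless. In $G_1$ it flips exactly the arcs of $C$ at $w$, so it multiplies $\sigma_1(e)$ by $-1$ precisely for those $e \in C$ incident with $w$. The same holds with the roles of $X$ and $\overline{X}$ exchanged. Hence we may multiply $\sigma_1$ by any function of the form $a(w)$ and $\sigma_2$ by any function of the form $b(u)$. The desired $D$, in which $\sigma_1$ and $\sigma_2$ are both constant, exists as soon as $\sigma_1(e)$ factors as $\phi(u)\psi(w)$ for $e = uw$. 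Since $\sigma_2 = s\,\varepsilon\,\sigma_1$ and $\varepsilon$ already has product form, the same then holds for $\sigma_2$.

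\textbf{Main obstacle.} The hard step is proving the factorisation of $\sigma_1$ across $C$. It suffices to show that $\sigma_1(e)\sigma_1(f)\sigma_1(g)\sigma_1(h) = 1$ whenever $e, f, g, h \in C$ form a "rectangle" with ends $u, u'$ in $X$ and $w, w'$ in $\overline{X}$, and that the bipartite graph induced on the ends by $C$ is connected (or can be handled component by component). I would first try to derive the rectangle relation from Step 2 applied to $G_2$. There, $\sigma_2(e)\sigma_2(f)$ is the sign of an $(M_2,N_2)$-alternating cycle through $x$. Comparing with $G_1$ via Proposition~\ref{prop:sign-product} should make the product over a rectangle equal to $\varepsilon$-terms, which cancel by their product form. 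If connectivity fails, I would handle each component of that bipartite graph independently, since the vertex functions can be chosen separately on each component.
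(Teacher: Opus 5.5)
Your set-up in Steps 1--2 is sound: $\sigma_1,\sigma_2$ are well defined and $\sigma_1(e)\sigma_2(e)\varepsilon(e)=s$. However, the proof has a genuine gap at exactly the step you flag, and two claims feeding into it are wrong.

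First, $\varepsilon(e)$ is not positional. The orientation of $e$ enters each of $\sign_{D}(M)$, $\sign_{D_1}(M_1)$ and $\sign_{D_2}(M_2)$ exactly once, so $\varepsilon(e)$ carries the factor $o(e)$, where $o(e)=1$ if $e$ is directed from $X$ to $\overline{X}$ and $o(e)=-1$ otherwise. With your enumerations (keeping the relative order inside $X$ and inside $\overline{X}$) the positional terms cancel and $\varepsilon(e)=o(e)$ exactly; $G=K_2$ already shows this. Since $D_0$ is arbitrary, $o$ has no product form a priori. That it does is a consequence of pfaffianness, i.e.\ part of what must be proved. So both ``$\varepsilon$ already has product form'' and your plan to cancel the $\varepsilon$-terms around a rectangle are circular.

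Second, the reduction in Step 3 aims at the wrong property. A reversal at $u\in X$ multiplies \emph{every} sign in $G_1$ by $-1$, so, as you note, $\sigma_1$ can only be multiplied by functions of the end in $\overline{X}$ and by global signs. In fact, for $e,f\in C$ with a common end in $\overline{X}$, the product $\sigma_1(e)\sigma_1(f)$ is invariant under every cut reversal. Hence if $\sigma_1(e)=\phi(u)\psi(w)$ with $\phi(u)\neq\phi(u')$ for some $uw,u'w\in C$, no similar orientation makes $\sigma_1$ constant. What you need is that $\sigma_1$ depends only on the end in $\overline{X}$ and $\sigma_2$ only on the end in $X$.

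Third, even product form would not follow from rectangle relations plus connectivity. Product form on the bipartite graph $B$ formed by $C$ on its ends means the product around \emph{every} cycle of $B$ is $1$. The cycle space of $B$ need not be generated by $4$-cycles; take $B$ to be a $6$-cycle.

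The missing idea is that edges of $C$ sharing an end become parallel in the contraction. If $e=uw$ and $f=uw'$ with $u\in X$, both join $u$ to $\bar{x}$ in $G_1$. So $M_1-e+f$ is a perfect matching of $G_1$ whenever $M_1\ni e$ is, and the two permutations coincide except for the order of $u$ and $\bar{x}$, which is reversed exactly when $o(e)\neq o(f)$. Thus $\sigma_1(e)\sigma_1(f)=o(e)o(f)$, and together with $\sigma_1\sigma_2=s\,o$ this gives $\sigma_2(e)=\sigma_2(f)$. Symmetrically, parallel edges at $w$ in $G_2$ give $\sigma_1(e)=\sigma_1(f)$ whenever $e,f$ share their end in $\overline{X}$. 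So $\sigma_1(e)=\psi(w)$ and $\sigma_2(e)=\phi(u)$. Reversing $\partial(S)$ with $S=\{w\in\overline{X}:\psi(w)=-1\}\cup\{u\in X:\phi(u)=-1\}$ then makes both constant, with no rectangle or connectivity analysis needed.

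Repaired this way, your argument is a shorter alternative to the paper's route, which is the proof of Theorem~\ref{thm:clm-generalization} with $k=1$. That proof fixes $M$ with $M\cap C=\{e\}$, normalises even alternating reference paths from the ends of $e$ to all ends of $C$, and splices them into alternating cycles through the contraction vertex. The paper's version costs more, but it survives for $k$-orientations. There the perfect matchings of $G$ no longer share one sign, so your $\sigma_1,\sigma_2$ are not even well defined.
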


We prove a generalization of the previous theorem to \mbox{$k$-orientations}.

\begin{theorem}\label{thm:clm-generalization}
	Let $C$ be a separating cut of a matching covered graph $G$ and let $\mathbf{D^{(0)}}$ be a pfaffian $\pf(G)$-orientation of $G$.
	Then $G$ has an $\pf(G)$-orientation $\mathbf{D}$ similar to $\mathbf{D^{(0)}}$ such that both $C$-contractions of $\mathbf{D}$ are pfaffian.
\end{theorem}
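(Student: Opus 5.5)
The plan is to reduce the statement to a per-edge bookkeeping of signs across the cut. Then I would use cut reversals, which preserve similarity and hence pfaffianness by Corollary~\ref{cor:similar-k-orientations}, to make that bookkeeping uniform.

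Write $k:=\pf(G)$ and $C=\partial(X)$, and let $G_1:=G/\overline{X}$ and $G_2:=G/X$, with contraction vertices $\bar x$ and $x$ respectively. Let $\boldalpha$ be a solution of $\mathbf{D^{(0)}}$. As in the proof of the corollary following Theorem~\ref{thm:khatri-rao-upperbound}, $\boldalpha$ has no zero entries, by minimality of $k$.

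By Theorem~\ref{thm:separating-cut-characterization}, every perfect matching $M_1$ of $G_1$ contains exactly one edge $e\in C$. Since $G_2$ is matching covered, we may fix, for each $e\in C$, a perfect matching $M_2^e$ of $G_2$ containing $e$. Then $M_1\cup M_2^e$ is a perfect matching of $G$ meeting $C$ only in $e$.

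Fix enumerations in which $\bar x$ takes the position of the $\overline{X}$-end of $e$ and $x$ takes the position of the $X$-end. An inversion count in the style of Proposition~\ref{prop:khatri-rao-and-signature-matrix} should then give, for each orientation $D$ of $G$,
\[ \sign_D(M_1\cup M_2^e)=\sign_{D/\overline{X}}(M_1)\cdot\sign_{D/X}(M_2^e)\cdot\lambda_D(u)\,\mu_D(v), \]
where $e=uv$ with $u\in X$ and $v\in\overline{X}$. Here $\lambda_D(u),\mu_D(v)\in\{\pm1\}$ depend only on the positions of the endpoints and on $D$, not on $M_1$. Making this factorization precise is the first key step, and I would do it by a direct transposition count.

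Next define $\varepsilon_i(e):=\sign_{D_i/X}(M_2^e)\,\lambda_{D_i}(u)\,\mu_{D_i}(v)$. For every perfect matching $M_1$ of $G_1$ containing $e$, the row equation of $\mathbf{D}$ for $M_1\cup M_2^e$ becomes
\[ \boldsign_{\mathbf{D}/\overline{X}}(M_1)\cdot(\boldalpha\odot\boldsymbol{\varepsilon}(e))=1 . \]
So the contraction $\mathbf{D}/\overline{X}$ is pfaffian with solution $\boldalpha\odot\boldsymbol{\varepsilon}$ as soon as, for every $i$, $\varepsilon_i(e)$ is the same for all $e\in C$. The goal is therefore to choose, independently for each $i$, a cut $\partial(Y_i)$ and replace $D_i$ by $D_i\rev\partial(Y_i)$ so that $\varepsilon_i$ becomes constant on $C$. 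Doing the same on the other side then makes $\mathbf{D}/X$ pfaffian too.

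The tools are trivial-cut reversals. Reversing $\partial(\{w\})$ in $D_i$ for $w\in\overline{X}$ multiplies the signs of all matchings of $G$ and of $G_2$ by the same factor $-1$. In $G_1$ it reverses exactly the $C$-edges at $w$, so $\varepsilon_i(e)$ flips precisely for those $e\in C$ incident with $w$. The symmetric statement holds for $w\in X$. Since $\varepsilon_i(uv)$ has the product form above, a product of a function of $u$ and a function of $v$, these vertex switchings can absorb both factors. Concretely, take $Y_i$ to be the set of endpoints $w$ with the relevant factor equal to $-1$.

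A cut reversal in $G$ restricts to cut reversals in both contractions. So the same $Y_i$ can be chosen to serve both $G_1$ and $G_2$ simultaneously. Care is needed here because a vertex reversal on one side also changes the restriction to the other contraction.

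The main obstacle is the factorization in the first step. One must show that the discrepancy $\varepsilon_i(e)$ really splits as a vertex function on $X$ times a vertex function on $\overline{X}$, rather than being an arbitrary $\pm1$ labelling of the bipartite graph formed by $C$. If it were arbitrary, switching could fix it only when the product around every cycle of that bipartite graph is $1$.

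If the direct permutation count does not give the product form cleanly, I would instead verify the cycle condition. I would take edges $e,f\in C$ and matchings of $G$ using $e$ and $f$ respectively, and compare the two expressions via Proposition~\ref{prop:sign-product}. The alternating cycles cross $C$ an even number of times, so their signs in $G$ should agree with the signs of the corresponding cycles in the contractions. This would give the required consistency of $\varepsilon_i$ around cycles.
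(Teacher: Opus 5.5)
\textbf{There is a genuine gap: the step that makes $\varepsilon_i$ constant fails as written.}

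Your reduction is sound. Once $\boldsign_{\mathbf D}(M_1\cup M_2^e)=\boldsign_{\mathbf D/\overline X}(M_1)\odot\boldsymbol{\varepsilon}(e)$, the vector $\boldalpha\odot\boldsymbol{\varepsilon}$ solves $\mathbf D/\overline X$ as soon as every $\varepsilon_i$ is constant on $C$. The problem is in making it constant.

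First, the factorization is not of product form. Fix one enumeration independently of $e$: $X$ before $\overline X$ in $G$, $\bar x$ last in $G_1$, $x$ first in $G_2$. A transposition count then gives $\sign_D(M_1\cup M_2)=\sign_{D/\overline X}(M_1)\,\sign_{D/X}(M_2)\,\rho_D(e)$. Here $\rho_D(e)=1$ if $e$ is directed from $X$ to $\overline X$ and $\rho_D(e)=-1$ otherwise, which is an arbitrary labelling of $C$, not $\lambda(u)\mu(v)$. Moreover, $\varepsilon_i(e)$ contains $\sign_{D_i/X}(M_2^e)$ for an arbitrarily chosen $M_2^e$, and your argument never controls this factor.

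Second, the switching analysis misfires. Since $\varepsilon_i(e)=\sign_{D_i}(M_1\cup M_2^e)\,\sign_{D_i/\overline X}(M_1)$, reversing $\partial(\{w\})$ with $w\in X$ flips both factors and leaves $\varepsilon_i$ unchanged. Only reversals at vertices of $\overline X$ act on $\varepsilon_i$, and they act only through the $\overline X$-end of $e$. So $\varepsilon_i$ can be made constant only if, up to a global sign, it depends on the $\overline X$-end of $e$ alone. A form $f(u)g(v)$, or the cycle condition in the bipartite graph spanned by $C$, does not suffice.

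With arbitrary $M_2^e$ this genuinely fails. For $e=uv$ and $e'=u'v$, the product $\varepsilon_i(e)\varepsilon_i(e')$ equals the relative sign in $D_i$ of the perfect matchings $M_2^e-e$ and $M_2^{e'}-e'$ of $G[\overline X]-v$. This can be $-1$, and no cut reversal changes it.

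Your fallback also rests on a false premise. An alternating cycle of $G$ crossing $C$ twice carries the forward-arc parity of its segment inside $\overline X$, which $G/\overline X$ does not see. Its signs in $G$ and in the contraction agree only after a normalization.

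The missing idea is a coherent choice of completions. For each end $w\in\overline X$ of an edge of $C$, fix one perfect matching $N_w$ of $G[\overline X]-w$; it exists because $G_2$ is matching covered. Put $M_2^e:=N_w+e$ for every $e\in C$ at $w$. Then the orientation of $e$ inside $\sign_{D_i/X}(M_2^e)$ cancels $\rho_{D_i}(e)$. Hence $\varepsilon_i(e)=g_i(w)$, the sign in $D_i/X$ of $N_w$ together with an arc directed from $x$ to $w$, depends on $w$ alone.

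Reversing $\partial(Y)$ with $Y\subseteq\overline X$ multiplies $\varepsilon_i(e)$ by $(-1)^{|Y|}$, and by a further $-1$ exactly when $w\in Y$. So $Y_i'':=\{w: g_i(w)=-1\}$ makes $\varepsilon_i$ constant. The symmetric construction gives $Y_i'\subseteq X$ for $G_2$. Reversals at vertices of $X$ leave $\varepsilon_i$ unchanged, and symmetrically reversals at vertices of $\overline X$ leave the corresponding quantity for $G_2$ unchanged. Therefore the single cut $\partial(Y_i'\cup Y_i'')$ serves both contractions at once.

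This repaired argument is the paper's proof in different clothing. Its even $M''$-alternating paths $P(w'')$ from $v$ encode $N_{w''}=(M''-e)\symdiff E(P(w''))$. Reversing $\partial(S_i)$ so that $|\fw_{D_i}(P(w''))|$ is even is exactly the normalization $g_i(w'')=g_i(v)$. That normalization is what makes $\boldsign_{\mathbf D'}(Q')=\boldsign_{\mathbf D}(\varphi(Q'))$ hold.
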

\begin{proof}
	We will build $\mathbf{D}$ and then show that the $C$-contractions of $\mathbf{D}$ remain pfaffian.
	For this purpose, we will describe a method to obtain vertex sets $S_i$ for every $i$, $1 \leq i \leq \pf(G)$, and show that the $\pf(G)$-orientation $\mathbf{D}$ defined by $D_i := D^{(0)}_i \rev \partial(S_i)$ is as desired.
	Let $G' := G / \overline{X}$ and let $G'' :=  G/X$.
	Let~$e  := uv$ be an    edge   of    $C$    with   $u    \in    X$.
	Given that $C$ is a separating cut, there is a perfect matching, say $M$, such that $M \cap C = \{e\}$.
	Let $M'$ and $M''$ be the restrictions of $M$ to $G'$ and to $G''$, respectively.

	Let $V'$ be the set of vertex of $X$ that are ends of the edges in $C$.
	Likewise, let $V''$ be the set of vertex of~$\overline{X}$ that are ends of the edges in $C$. 
	For any $w'' \in V''$, we define $P(w'')$ as an even length $M''$-alternating path from $v$ to $w''$ as follows.
	If $w'' = v$, then $P(w'') := (v)$. Otherwise, there is an edge $f \neq e$ in $C$ incident to $w''$.
	As $C$ is separating, there is a perfect matching~$M_f$ of~$G$ such that $M_f \cap C = \{f\}$.
	Therefore, there is an $(M, M_f)$-alternating cycle $Q$ of $G$ such that $E(Q) \cap C = \{e, f\}$.
	Define $P(w'')$ as a $(M'', M_f)$-alternating path from $v$ to $w''$ in $\overline{X}$.
	See Figure~\ref{fig:separating-cut-proof}.
	Indeed, the path $P(w'')$ has even length because the first edge of the path is in $M_f$ and the last edge is in $M''$.
	Likewise, for every $w' \in V'$, define $P(w')$ as an even length $M'$-alternating path from $u$ to $w'$.
	For every $i$, $1 \leq i \leq \pf(G)$, define
		\[S_i := \{w \in V' \cup V'' \colon |\fw_{D^{(0)}_i}(P(w))| \equiv 1 \pmod 2 \}. \]
	Since the paths $P(w)$ have even length, the sets $S_i$ are well-defined.

	\begin{figure}[tb]
		\centering \includegraphics{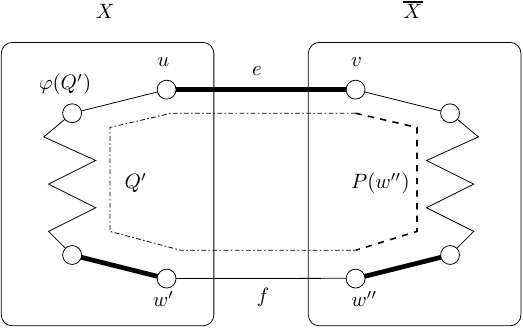}
		\caption{                   Proof                   of
		  Theorem~\ref{thm:clm-generalization}.
			\label{fig:separating-cut-proof}
		}
	\end{figure}

	Recall that \mbox{$D_i = D^{(0)}_i \rev \partial(S_i)$}.
	The $\pf(G)$-orientations $\mathbf{D}$ and $\mathbf{D^{(0)}}$ are similar; hence, by Corollary~\ref{cor:similar-k-orientations}, $\mathbf{D}$ is pfaffian.
	Also, by the definition of the sets $S_i$, we have that 
	\begin{equation}\label{eq:evenly-oriented-paths}
		|\fw_{D_i}(P(w))| \equiv 0 \pmod 2, \text{ for every } w \in V' \cup V''.
	\end{equation}
	We will show that the $C$-contractions of $\mathbf{D}$ are pfaffian.

	Let $\mathbf{D'}$ be the contraction of $\mathbf{D}$ to $G'$.
	For every $M'$-alternating cycle $Q'$ in $G'$, let $\varphi(Q')$ be the $M$-alternating cycle of $G$ defined as follows.
	If $E(Q') \cap C$ is empty, then $\varphi(Q') := Q'$.
	Otherwise, there is a unique edge $f \neq e$ in $C$ that is contained in $E(Q')$.
	Let $w''$ be the end of $f$ that is contained in $V''$.
	Define $\varphi(Q')$ as the $M$-alternating cycle formed by the edges in $E(Q') \cup E(P(w''))$.
	See Figure~\ref{fig:separating-cut-proof}.

	\begin{claim}\label{claim:equal-signs}
		For every $M'$-alternating cycle $Q'$, $\boldsign_\mathbf{D'}(Q') = \boldsign_\mathbf{D}(\varphi(Q'))$ holds.
	\end{claim}
	\begin{claimproof}
		If $E(Q') \cap C = \varnothing$, the claim follows trivially.
		Suppose then that $E(Q') \cap C = \{e, f\}$.
		Let $w''$ be the vertex in $V''$ that is an end of $f$.
		Thus, for every $i$, $1 \leq i \leq \pf(G)$, we have that
			\[ |\fw_{D'_i}(Q')| = |\fw_{D_i}(\varphi(Q'))| - |\fw_{D_i}(P(w''))|. \]
		We conclude that $\sign_{D'_i}(Q') = \sign_{D_i}(\varphi(Q'))$ from~\eqref{eq:evenly-oriented-paths}.
	\end{claimproof}

	Let $\mathbf{v} := \boldsign_\mathbf{D'}(M') \odot \boldsign_\mathbf{D}(M)$ be a vector.
	We will show that, for every perfect matching $N'$ of $G'$, there is a perfect matching $N$ of $G$, such that 
	$	\boldsign_{\mathbf{D}'}(N')                  =
		\boldsign_{\mathbf{D}}(N)            \odot
		\mathbf{v}                  $.
	This implies that $\boldalpha \odot \mathbf{v}$ is a solution of $\mathbf{D'}$, where $\boldalpha$ is the solution of $\mathbf{D}$.

	Let $N'$ be a perfect matching of $G'$.
	Let $\mathcal{C}$ be the set of $(M', N')$-alternating cycles.
	By Proposition~\ref{prop:boldsign-product}, we know that
	\[ \boldsign_{\mathbf{D'}}(N') = \boldsign_\mathbf{D'}(M') \odot \prod_{Q' \in \mathcal{C}}\boldsign_\mathbf{D'}(Q'). \]
	From Claim~\ref{claim:equal-signs} and the definition of $\mathbf{v}$, we obtain that
	\[ \boldsign_{\mathbf{D'}}(N') = (\boldsign_\mathbf{D}(M) \odot \prod_{Q' \in \mathcal{C}}\boldsign_\mathbf{D}(\varphi(Q'))) \odot \mathbf{v}. \]
	Defining $N := M \symdiff (\bigcup_{Q' \in \mathcal{C}} E(\varphi(Q')))$, a perfect matching of $G$, we conclude that 
	\[	\boldsign_{\mathbf{D}'}(N')                  =
		\boldsign_{\mathbf{D}}(N)            \odot
		\mathbf{v}                  \] 
	as we desired.
	As mentioned above, this implies that $\mathbf{D'}$ has a solution and, therefore, is pfaffian.
	By a symmetric argument, we conclude that the contraction of $\mathbf{D}$ to $G''$ is also pfaffian.
\end{proof}

The theorem above implies a lower bound for the pfaffian number of a graph in terms of the pfaffian numbers of its separating cut contractions.

\begin{corollary}\label{cor:separating-cut-lower-bound}
	Let $G$ be a matching covered  graph and let $C = \partial(X)$
	be a separating cut of~$G$.  Then
		\[ \pf(G) \geq \max\{\pf(G/X), \pf(G/\overline{X})\}. \tag*{\qed} \]
\end{corollary}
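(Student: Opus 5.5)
This follows directly from Theorem~\ref{thm:clm-generalization}. Set $k := \pf(G)$. By definition of the pfaffian number, $G$ admits a pfaffian $k$-orientation $\mathbf{D^{(0)}}$. Since $C = \partial(X)$ is separating, Theorem~\ref{thm:clm-generalization} applies. It yields a $k$-orientation $\mathbf{D}$ of $G$, similar to $\mathbf{D^{(0)}}$, whose contractions to $G/X$ and to $G/\overline{X}$ are both pfaffian.

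The contraction of a $k$-orientation of $G$ to $G/\overline{X}$ is a $k$-orientation of $G/\overline{X}$. Indeed, each $D_i$ induces an orientation of the contracted graph by keeping the orientation of every surviving edge, with the contracted vertex playing the role of the end in $\overline{X}$. So $G/\overline{X}$ has a pfaffian $k$-orientation, i.e.\ it is $k$-pfaffian. Because $\pf(G/\overline{X})$ is the least $j$ for which $G/\overline{X}$ is $j$-pfaffian, we get $\pf(G/\overline{X}) \le k = \pf(G)$. The same argument applied to $G/X$ gives $\pf(G/X) \le \pf(G)$, and taking the maximum gives the stated inequality.

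Two small points need checking. First, a separating cut makes both $C$-contractions matching covered, so their pfaffian numbers are defined and they admit perfect matchings. Second, a pfaffian $k$-orientation witnesses that a graph is $k$-pfaffian even if its solution vector has zero entries, so no minimality issue arises. The real content lies entirely in Theorem~\ref{thm:clm-generalization}, which is taken as given here. The only step needing care is confirming that the $C$-contractions of $\mathbf{D}$ are $k$-orientations of the contracted graphs in exactly the sense of the definition of $k$-pfaffian, which is immediate.
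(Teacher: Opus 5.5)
Your proposal is correct and matches the paper's argument, which the paper leaves implicit (the corollary is marked \qed): apply Theorem~\ref{thm:clm-generalization} to a pfaffian $\pf(G)$-orientation, note that its $C$-contractions are pfaffian $\pf(G)$-orientations of $G/X$ and $G/\overline{X}$, and conclude by minimality of the pfaffian number. Your remark that a separating cut makes both contractions matching covered, so that their pfaffian numbers are defined, is a useful detail the paper does not state.
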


Together    with Theorem~\ref{thm:pfaffian-tight-cut}  by Little  and Rendl and with Theorem~\ref{thm:norine-pfaffian-numbers} by Norine,  we obtain the following corollary.

\begin{corollary}
  Let $G$ be a matching covered graph  with $\pf(G) = 4$.  There is at
  least  one  graph  with  pfaffian   number  four  in  the  tight  cut
  decomposition of $G$.  \qed
\end{corollary}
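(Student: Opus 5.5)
The plan is to argue by contradiction, using the tight cut decomposition together with Corollary~\ref{cor:separating-cut-lower-bound}, Theorem~\ref{thm:pfaffian-tight-cut} and Theorem~\ref{thm:norine-pfaffian-numbers}.

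First, every graph $H$ appearing in a tight cut decomposition of $G$ is obtained from $G$ by a sequence of $C$-contractions along non-trivial tight cuts. Tight cuts are separating, and a tight cut of an intermediate contraction is again a separating cut of that (matching covered) contraction. Hence, applying Corollary~\ref{cor:separating-cut-lower-bound} repeatedly along the sequence of contractions gives $\pf(H) \le \pf(G) = 4$ for every brick and brace $H$ in the decomposition. By Theorem~\ref{thm:norine-pfaffian-numbers}, every $3$-pfaffian graph is pfaffian, so no graph has pfaffian number $2$ or $3$. Therefore each such $H$ satisfies $\pf(H) \in \{1, 4\}$.

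Now suppose, for a contradiction, that no graph in the decomposition has pfaffian number four. Then every brick and brace in the decomposition is pfaffian. We apply Theorem~\ref{thm:pfaffian-tight-cut} inductively up the decomposition tree: if both $C$-contractions at a tight cut $C$ are pfaffian, then so is the graph being contracted. Hence $G$ itself is pfaffian, that is, $\pf(G) = 1$, contradicting $\pf(G) = 4$.

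The main point needing care is the repeated application of the two results along the recursive decomposition. Formally, this is an induction on the number of non-trivial tight cut contractions. It requires that the $C$-contractions of a matching covered graph at a tight cut are matching covered, which holds because tight cuts are separating. It also requires that the tight cuts used at later stages are cuts of the intermediate graphs, so that Corollary~\ref{cor:separating-cut-lower-bound} and Theorem~\ref{thm:pfaffian-tight-cut} apply to those intermediate graphs rather than to $G$. Multiple edges produced by contractions do not affect pfaffian numbers up to the convention in Lovász's uniqueness theorem; since any fixed decomposition suffices, uniqueness is not even needed.
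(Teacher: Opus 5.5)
Your proof is correct and is the argument the paper intends. It uses Corollary~\ref{cor:separating-cut-lower-bound} along the decomposition to get $\pf(H)\le 4$ for every brick and brace, Norine's Theorem~\ref{thm:norine-pfaffian-numbers} to exclude the values $2$ and $3$ (via the trivial fact that $k$-pfaffian implies $(k+1)$-pfaffian), and Little and Rendl's Theorem~\ref{thm:pfaffian-tight-cut} to rule out that all of them are pfaffian.
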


One might believe that, as a generalization of Theorem~\ref{thm:pfaffian-tight-cut}, the pfaffian number of a graph is equal to the pfaffian number of one of its $C$-contractions if $C$ is tight.
This is not true.
As a counterexample, consider the graph $G_{19}$ in Figure~\ref{fig:g19} discovered by Miranda and Lucchesi~\cite{milu11}.
The cut $C$ shown in this figure is tight.
We know that $\pf(G_{19}) = 6$, but the $C$-contractions of this graph
are isomorphic to $K_{3,3}$, up to multiple edges, so each of them
have pfaffian number four.
However, equality holds in Theorem~\ref{thm:pfaffian-tight-cut} for a specific type of tight cut contractions, the bicontractions.

\begin{figure}[tb]
	\centering
	\includegraphics{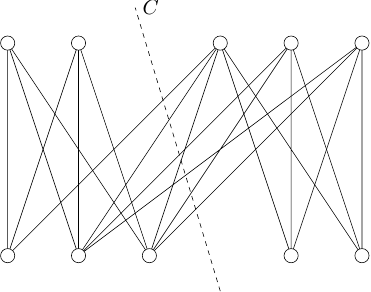}
	\caption{
		The graph $G_{19}$.
		\label{fig:g19}
	}
\end{figure}

Let $v$ be a vertex of degree two of a matching covered graph $G$ with four or more vertices.
Let $u$ and $w$ be the neighbours of $v$.
Then, the cut $\partial(X)$, where $X := \{u, v, w\}$, is tight and the graph $G/X$ is said to be a \emph{bicontraction} of $v$ from $G$.
The \emph{retract} of $G$, denoted by $\widehat{G}$, is a matching covered graph obtained from $G$ by repeated bicontractions whenever possible.
In particular, $G = \widehat{G}$ if there are no vertices of degree two.
It is easy to see that different orders of bicontractions in this process generate the same graph up to isomorphism;
therefore, the retract of a graph is well-defined.
Notice that $G$ is a bisubdivision of~$\widehat{G}$.

\begin{proposition}\label{prop:pf-retract}
	Let $G$ be a matching covered graph.
	Then
		\[ \pf(G) = \pf(\widehat{G}). \]
\end{proposition}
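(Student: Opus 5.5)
It suffices to handle a single bicontraction and then induct on the number of bicontractions performed to reach $\widehat{G}$. So let $v$ be a vertex of degree two of $G$ with neighbours $u$ and $w$, let $X := \{u,v,w\}$ and let $H := G/X$, with contraction vertex $x$. We will show $\pf(G) = \pf(H)$. The cut $C := \partial(X)$ is tight, so it is separating. Corollary~\ref{cor:separating-cut-lower-bound} then immediately gives $\pf(G) \ge \pf(H)$.

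For the reverse inequality $\pf(G) \le \pf(H)$, we lift a pfaffian $\pf(H)$-orientation $\mathbf{D}$ of $H$ to $G$. The lifting rests on a bijection between perfect matchings. Every perfect matching $M$ of $G$ contains exactly one of $vu$, $vw$, since $v$ has degree two. It also contains exactly one edge of $C$, because $C$ is tight. That edge is incident with $w$ in the first case and with $u$ in the second. Deleting the edge at $v$ therefore gives a perfect matching $M_H$ of $H$. Conversely, a perfect matching of $H$ uses exactly one edge at $x$. That edge comes from an edge of $C$ at $u$ or at $w$, and it extends uniquely by $vw$ or $vu$ respectively. Hence $M \mapsto M_H$ is a bijection $\mathcal{M}(G) \to \mathcal{M}(H)$.

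We now define the lifted orientation $D^G_i$ for each $i$. It agrees with $D_i$ on the edges of $C$ and on the edges outside $X$, and it orients $u \to v \to w$. We then verify that for every perfect matching $M$ the product $\sign_{D^G_i}(M)\cdot\sign_{D_i}(M_H)$ is independent of $M$, and moreover independent of $i$ up to a fixed vector $\mathbf{v}$. The cleanest route is Proposition~\ref{prop:boldsign-product}: fix a reference matching $M_0$ and compare alternating cycles. Each $(M_0,N)$-alternating cycle of $G$ either avoids $X$, in which case it is literally a cycle of $H$, or passes through $u,v,w$ along a segment $f\,uv\,vw\,g$ or similar. Contracting $X$ turns that cycle into an alternating cycle of $H$ whose length is shorter by two. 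Along the path $u \to v \to w$ both arcs are forward in one traversal direction and both are backward in the other. Thus the number of forward arcs changes by $0$ or $2$, and the parity, hence the sign, is preserved. So the cycle signs agree, and therefore $\boldsign_{\mathbf{D}^G}(N) = \boldsign_{\mathbf{D}}(N_H)\odot\mathbf{v}$ with $\mathbf{v} := \boldsign_{\mathbf{D}^G}(M_0)\odot\boldsign_{\mathbf{D}}(M_{0,H})$. It follows that $\boldalpha\odot\mathbf{v}$ is a solution for $\mathbf{D}^G$, exactly as in the proof of Theorem~\ref{thm:clm-generalization}.

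The main obstacle is the bookkeeping for the correspondence of alternating cycles. We must check that the symmetric difference of $M_0$ and $N$ in $G$ maps onto the symmetric difference of $M_{0,H}$ and $N_H$. When $M_0$ and $N$ use different edges at $v$, the cycle through $X$ contains both $uv$ and $vw$, and we must make sure it contracts to a genuine cycle and not to a degenerate closed walk. This is fine because the two $C$-edges used are distinct edges at $x$. When both matchings use the same edge at $v$, no alternating cycle passes through $v$. One must then check that a cycle entering $X$ at $u$ and leaving at $w$ cannot occur without using $v$, since $uw$ would have to be an edge. If $uw$ is an edge, it is a loop after contraction, and it can be assumed absent: an edge $uw$ is not matchable alongside $v$, contradicting that $G$ is matching covered. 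With these cases settled, induction over the bicontractions completes the proof.
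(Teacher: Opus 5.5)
Your proposal is correct and follows the paper's proof. You obtain $\pf(G)\ge\pf(H)$ from Corollary~\ref{cor:separating-cut-lower-bound}, and the reverse inequality by extending a pfaffian $\pf(H)$-orientation with $u\to v\to w$ and comparing alternating-cycle signs to get $\boldsign(N)=\boldsign(N_H)\odot\mathbf{v}$, which you work out in more detail than the paper's ``it is easy to see''. One small case is missing from your analysis: when both matchings use the same edge at $v$, there can be alternating cycles that meet $X$ only at $w$ (or only at $u$) through two edges of $C$. These contract to cycles of $H$ with exactly the same arcs, so their signs agree trivially.
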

\begin{proof}
	If $G$ has no vertices of degree two, the statement follows trivially.
	Then, suppose $v$ is a vertex of degree two in $G$ with neighbours $u$ and $w$.
	Let $H$ be the graph obtained from $G$ by bicontraction of $v$.
	We will show that $\pf(G) = \pf(H)$.

	By Corollary~\ref{cor:separating-cut-lower-bound}, we know that $\pf(G) \geq \pf(H)$.
	Therefore, it suffices to prove that \mbox{$\pf(H) \geq \pf(G)$}.
	Let $\mathbf{D_H}$ be a $\pf(H)$-orientation of $H$.
	Extend $\mathbf{D_H}$ to a $\pf(H)$-orientation $\mathbf{D}$ of $G$ such that $(u, v, w)$ is an oriented path.
	Let $Q$ be a conformal cycle of $G$ that contains the vertex~$v$ and let $Q_H$ be the bicontraction of $v$ from $Q$.
	By definition of sign of cycles, we have that $\boldsign_\mathbf{D}(Q) = \boldsign_\mathbf{D_H}(Q_H)$.
	This implies that the sets of all signs of conformal cycles of $\mathbf{D_H}$ and of $\mathbf{D}$ are equal. 
	Let $M$ be a perfect matching of $G$ that contains $uv$ and let $M_H := M - uv$.
	Let $\mathbf{v} := \boldsign_\mathbf{D}(M) \odot \boldsign_\mathbf{D_H}(M_H)$ be a vector.
	It is easy to see that, for every perfect matching $N$ of $G$, there is a perfect matching $N_H$ of $H$, such that 
	\[	\boldsign_{\mathbf{D}}(N)                  =
		\boldsign_{\mathbf{D_H}}(N_H)            \odot
		\mathbf{v}. \]
	This implies that $\mathbf{D}$ is pfaffian.
	The proposition follows by induction.
\end{proof}

\section{Conclusions}
\label{sec:conclusion}

We proved lower bounds for the pfaffian number of graphs.
Lower bounds for a closely related method, namely the symbolic pfaffian method, were already known.
We established a relation between the pfaffian number and the symbolic pfaffian number: for a graph $G$, the relation $\pf(G) \geq \spf(G) + 1$ holds.
However, the condition for the existence of a pfaffian $k$-orientation is more complicated than the condition for the existence of a pfaffian $k$-symbolic orientation.
In that sense, we believe that the growth rate of the pfaffian number might be much faster than the growth rate of the symbolic pfaffian number.
In fact, the growth rate of the pfaffian number might be exponential for complete graphs.
In that regard, we conjecture that the lower bound of Corollary~\ref{cor:pf-conformal-lower-bound} is not sharp if both conformal subgraphs are not pfaffian.
\begin{conjecture}
	Let $G$ be a matchable graph.
	Let $H'$ be a non-spanning conformal subgraph of $G$ and let $H'' := G - V(H')$.
	If $H'$ and $H''$ are not pfaffian, then
		\[ \pf(H') + \pf(H'') - 1 < \pf(G).  \]
\end{conjecture}

We also proved that the pfaffian number of a graph does not
increase after a separating cut contraction.
In particular, a bicontraction preserves the pfaffian number.
It is an interesting open problem to characterize separating cuts
whose contractions  preserve the pfaffian number.

\bibliography{paper}

\begin{thebibliography}{10}

\bibitem{bavy17}
Alina~V. Babenko and Mikhail~N. Vyalyi.
\newblock On the linear classification of even and odd permutation matrices and
  the complexity of computing the permanent.
\newblock {\em Computational Mathematics and Mathematical Physics},
  57(2):362–371, Feb 2017.
\newblock \href {https://doi.org/10.1134/s0965542517020038}
  {\path{doi:10.1134/s0965542517020038}}.

\bibitem{brush1967}
Stephen~G. Brush.
\newblock History of the {L}enz-{I}sing model.
\newblock {\em Reviews of Modern Physics}, 39(4):883, 1967.

\bibitem{clm12}
Marcelo~Henriques Carvalho, Cl\'audio~L. Lucchesi, and Uppaluri S.~R. Murty.
\newblock A generalization of {L}ittle's theorem on {P}faffian orientations.
\newblock {\em Journal of Combinatorial Theory, Series B}, 102:1241--1266,
  2012.

\bibitem{cms21}
Roberta R.~M. {Costa Moço}, Alberto A.~A. Miranda, and C\^andida~Nunes
  da~Silva.
\newblock The signature matrix for 6-{Pfaffian} graphs.
\newblock {\em Procedia Computer Science}, 195:298--305, 2021.
\newblock Proceedings of the XI Latin and American Algorithms, Graphs and
  Optimization Symposium.

\bibitem{galo99}
Anna Galluccio and Martin Loebl.
\newblock On the theory of {P}faffian orientations. {I. P}erfect matchings and
  permanents.
\newblock {\em Electronic Journal of Combinatorics}, 6, 1999.

\bibitem{gutman1977}
Ivan Gutman.
\newblock Topological properties of benzenoid systems.
\newblock {\em Theoretica Chimica Acta}, 45:309--315, 1977.

\bibitem{herndon1974}
William~C. Herndon.
\newblock Resonance theory and the enumeration of {K}ekule structures.
\newblock {\em Journal of Chemical Education}, 51(1):10, 1974.

\bibitem{kast61}
Pieter~W. Kasteleyn.
\newblock The statistics of dimers on a lattice: {I. T}he number of dimer
  arrangements on a quadratic lattice.
\newblock {\em Physica}, 27(12):1209--1225, 1961.

\bibitem{kast63}
Pieter~W. Kasteleyn.
\newblock Dimer statistics and phase transitions.
\newblock {\em Journal of Mathematical Physics}, 4:287--293, 1963.

\bibitem{kast1967}
Pieter~W. Kasteleyn.
\newblock Graph theory and crystal physics.
\newblock {\em Graph theory and theoretical physics}, pages 43--110, 1967.

\bibitem{kgz2017}
Mario Krenn, Xuemei Gu, and Anton Zeilinger.
\newblock Quantum {E}xperiments and {G}raphs: Multiparty {S}tates as {C}oherent
  {S}uperpositions of {P}erfect {M}atchings.
\newblock {\em Physical review letters}, 119(24):240403, 2017.

\bibitem{lire91}
Charles H.~C. Little and Franz Rendl.
\newblock Operations preserving the {P}faffian property of a graph.
\newblock {\em Journal of the Australian Mathematical Society (Series A)},
  50:248--275, 1991.

\bibitem{lova87}
L\'aszl\'o Lov\'asz.
\newblock Matching structure and the matching lattice.
\newblock {\em Journal of Combinatorial Theory, Series B}, 43:187--222, 1987.

\bibitem{lopl86}
L\'aszl\'o Lov\'asz and Michael~D. Plummer.
\newblock {\em Matching Theory}.
\newblock Number~29 in Annals of Discrete Mathematics. Elsevier Science, 1986.

\bibitem{lm2024}
Cl\'audio~L. Lucchesi and Uppaluri S.~R. Murty.
\newblock {\em Perfect Matchings: {}A Theory of Matching Covered Graphs}.
\newblock Springer International Publishing AG, 2024.

\bibitem{msv99}
Meena Mahajan, P.~R. Subramanya, and V.~Vinay.
\newblock A combinatorial algorithm for pfaffians.
\newblock In Takano Asano, Hideki Imai, D.~T. Lee, Shin-ichi Nakano, and
  Takeshi Tokuyama, editors, {\em Computing and Combinatorics}, pages 134--143,
  Berlin, Heidelberg, 1999. Springer Berlin Heidelberg.

\bibitem{milu11}
Alberto A.~A. Miranda and Cl\'audio~L. Lucchesi.
\newblock Matching signatures and pfaffian graphs.
\newblock {\em Discrete Mathematics}, 311:289--294, 2011.

\bibitem{nori09}
Serguei Norine.
\newblock Drawing 4-{P}faffian graphs on the torus.
\newblock {\em Combinatorica}, 29:109--119, 2009.

\bibitem{tesl00}
Glenn Tesler.
\newblock Matchings in graphs on non-orientable surfaces.
\newblock {\em Journal of Combinatorial Theory, Series B}, 78:189--231, 2000.

\bibitem{vali79}
Leslie Valiant.
\newblock The complexity of computing the permanent.
\newblock {\em Theoretical Computer Science}, 8:189--201, 1979.

\bibitem{vyalyi21}
Mikhail~N. Vyalyi.
\newblock Counting the number of perfect matchings, and generalized decision
  trees.
\newblock {\em Problems of Information Transmission}, 57(2):143–160, Apr
  2021.
\newblock \href {https://doi.org/10.1134/s0032946021020046}
  {\path{doi:10.1134/s0032946021020046}}.

\end{thebibliography}

\end{document}